\documentclass[a4paper,10pt]{amsart}

\usepackage{epsfig}
\usepackage{amsthm,amsfonts}
\usepackage{amssymb,graphicx,color}
\usepackage{float}
\graphicspath{ {Images/} }
\usepackage[labelfont={bf},textfont={it}]{caption}
\usepackage[labelfont={bf, it}]{subcaption}
\usepackage[all]{xy}
\usepackage{verbatim}
\usepackage{hyperref}
\usepackage{enumitem}

\textwidth=125mm
\textheight=185mm
\parindent=8mm
\evensidemargin=0pt
\oddsidemargin=0pt
\frenchspacing

\newtheorem{theorem}{Theorem}[section]
\newtheorem*{theorem*}{Theorem}
\newtheorem{lemma}[theorem]{Lemma}
\newtheorem{corollary}[theorem]{Corollary}

\newtheorem{definition}[theorem]{Definition}
\newtheorem{conjecture}{Conjecture}
\newtheorem{claim}{Claim}
\newtheorem{example}[theorem]{Example}
\newtheorem{remark}[theorem]{Remark}

% % Inicio numerar figuras como 1.a
% \renewcommand\thesubfigure{\thefigure.\alph{subfigure}}
% \DeclareCaptionLabelFormat{simple}{#2}
% \captionsetup[subfigure]{labelformat = simple}
% % Fim numerar figuras como 1.a

% inicio reenumerar toerema

\newenvironment{customthm}[1]
  {\innercustomthm}
  {\endinnercustomthm}
% fim reenumerar teorema

%Atalhos de objetos matematicos

\newcommand{\R}{\mathbb{R}}

\newcommand{\N}{\mathbb{N}}

\begin{document}

\title[On characterization of smoothness of complex analytic sets]
{On characterization of smoothness of complex analytic sets}

\author[A. Fernandes]{Alexandre Fernandes}
\author[J. E. Sampaio]{Jos\'e Edson Sampaio}
\address[Alexandre Fernandes and Jos\'e Edson Sampaio]{    
              Departamento de Matem\'atica, Universidade Federal do Cear\'a,
	      Rua Campus do Pici, s/n, Bloco 914, Pici, 60440-900, 
	      Fortaleza-CE, Brazil. \newline  
              E-mail: {\tt alex@mat.ufc.br}\newline  
              E-mail: {\tt edsonsampaio@mat.ufc.br}
}

\thanks{The first named author was partially supported by CNPq-Brazil grant 304221/2017-9.
The second author was partially supported by CNPq-Brazil grant 303811/2018-8, by the ERCEA 615655 NMST Consolidator Grant and also by the Basque Government through the BERC 2018-2021 program and Gobierno Vasco Grant IT1094-16, by the Spanish Ministry of Science, Innovation and Universities: BCAM Severo Ochoa accreditation SEV-2017-0718.
This study was financed in part by the CAPES-BRASIL Finance Code 001 and PRONEX-FUNCAP/CNPq/Brasil.}

\keywords{Regularity of analytic sets, Mumford's Theorem, Topology of Hausdorff limits}
\subjclass[2010]{14B05; 32S50}
%\thanks{The authors were partially supported by CNPq-Brazil}

\begin{abstract}
%In the same spirit as Theorem of Mumford which says that {\it 2-dimensional normal complex analytic germs with simply connected link must be smooth}, this paper deal with the problem of proving smoothness of complex analytic germs under minimal topological and or geometric assumptions. 

The paper is devoted to metric properties of singularities. We investigate the relations among topology, metric properties and smoothness.  In particular, we present some higher dimensional analogous of Mumford's theorem on smoothness of normal surfaces. For example, we prove that a complex analytic set, with an isolated singularity at $0$, is smooth at $0$ if and only if it is locally metrically conical at $0$ and its link at $0$ is a homotopy sphere.
\end{abstract}

\maketitle

% %%%%%%%%%%%%%%%%%%%%%%%%%%%%%%%%%%%%%%%%%%%%%%%%%%%%%%%%%%%%%%%%%%%%%%%%%%
% \makeatletter
% \def\blfootnote{\gdef\@thefnmark{}\@footnotetext}
%   \addtocounter{Hfootnote}{-1}%
% \makeatother
% \blfootnote{\textsuperscript{*}Corresponding author: {\tt edsonsampaio@mat.ufc.br}}
% %%%%%%%%%%%%%%%%%%%%%%%%%%%%%%%%%%%%%%%%%%%%%%%%%%%%%%%%%%%%%%%%%%%%%%%%%%

\section{Introduction}
%One of the most famous problems in Mathematics in the last century is the so-called Poincar\'e Conjecture (actually, now it is a theorem proved by G. Perelman in \cite{Perelman:2003a,Perelman:2003b}). That conjecture claimed that any compact topological (resp. smooth)  3-manifold which is simply connected needs to be homeomorphic (resp. diffeomorphic) to the 3-dimensional Euclidean sphere $\mathbb{S}^3$. A stronger version of Poincar\'e Conjecture stated that any compact topological (resp. smooth) n-dimensional manifold which is homotopy (resp. smooth homotopy) equivalent to the n-dimensional Euclidean sphere $\mathbb{S}^n$ needed to be homeomorphic (resp. diffeomorphic) to $\mathbb{S}^n$, this conjecture was also known by Generalized Poincar\'e Conjecture (see \cite{Smale:1961}).
A fundamental result in topology of singularities was proved by  D. Mumford in \cite{Mumford:1961}. It was shown that {\it 2-dimensional normal complex algebraic germs with simply connected link must be smooth. In particular, the link of such a germ is diffeomorphic to the 3-dimensional Euclidean sphere.}

E. Brieskorn \cite{Brieskorn:1966,Brieskorn:1966b}, D. Prill \cite{Prill:1967}, A'Campo \cite{Acampo:1973}, L\^e D. T. \cite{Le:1973} and others gave important contributions devoted to the relations of smoothness of a complex algebraic set at $0$ and the topology of its link at $0$. For instance, E. Brieskorn \cite{Brieskorn:1966,Brieskorn:1966b} exhibited examples of isolated singular (not smooth) germs of complex algebraic sets in higher dimension with link homeomorphic (not diffeomorphic) to Euclidean spheres. In other words, it was shown that Theorem of Mumford, mentioned above for 2-dimensional germs, does not work in higher dimensions without additional assumptions. As another reference of works in the same line, we also have D. Prill \cite{Prill:1967} where it was proved that {\it any d-dimensional complex algebraic cone with trivial i-th homotopy group of the link, for all $0\leq i \leq 2d-2$, must be a linear affine subspace.} One more result of the type of  Theorem of Mumford was obtained separately by N. A'Campo in \cite{Acampo:1973} and L\^e D. T. in \cite{Le:1973}, which can be stated as follows:
{\it
Let $(X,x_0)$ be a codimension one germ of complex analytic subset of $\mathbb{C}^n$. If there is a homeomorphism $\varphi\colon(\mathbb{C}^n,X,x_0)\to (\mathbb{C}^n,\mathbb{C}^{n-1}\times\{0\},x_0)$, then $X$ is smooth at $x_0$.
}

Recently, there were presented two different approaches for a type of Theorem of Mumford in higher dimension: one from contact geometry point of view and the other one  from Lipschitz geometry point of view. From the  contact geometry point of view, M. McLean in \cite{McLean:2016} showed that: if a complex analytic set $A$ has a normal isolated singularity at $0$ and its link at $0$ (with its canonical contact structure) is contactomorphic to the link of $\mathbb{C}^3$ (the standard contact sphere), then $A$ is smooth at $0$. T. de Fernex and Y.-C. Tu in \cite{FernexT:2017} also presented some results on characterization of smoothness with the same approach as M. McLean.

From the Lipschitz geometry point of view, it was proved by the authors of this paper jointly with L. Birbrair and L\^e D. T. in \cite{BirbrairFLS:2016} that: any germ of complex analytic set which is  subanalytically Lipschitz regular must be smooth; and the second named author of this paper showed in \cite{Sampaio:2016} the same result holds true without the "subanalytic" condition in the statement above.

Here we study possible analogs of Theorem of Mumford in higher dimension from Lipschitz geometry point of view. It is possible to state that complex analytic sets are smooth with additional minimal assumptions on the geometry of these sets. In particular, that their links are diffeomorphic to Euclidean spheres. The main results of this paper rely on a stability theorem of fundamental groups for Hausdorff limit of {\it Lipschitz normally embedded sets}. Thus, before the presentation of the main result of this paper, we need to introduce the notion of {\it Lipschitz normally embedded sets}.

Let us remind that, for path connected subsets $A\subset\R^n$, the {\it inner distance} of $A$ is defined by: $d_A(p,q)$ is the infimum of the lengths of paths connecting $p$ to $q$ on $A$.  In this way, we say that a path connected subset $A\subset\R^n$ is {\it Lipschitz normally embedded} (LNE) if there exists a constant $C\geq 1$ such that $$ d_{A}(p,q)\leq C|p-q| \ \mbox{for all} \ p,q\in A.$$ In order to be more precise, in that case we say that $A$ is $C$-LNE.

From now on, we start to present the main achievements of this paper. The first result is the following theorem which is presented in Section \ref{sec:key_results}; it is our stability theorem of fundamental groups for Hausdorff limit of C-LNE sets.

\begin{customthm}{\ref*{Fundamental_thm}}
	Let $X_0,X_1\subset\R^n$ be compact subanalytic $C$-LNE subsets. Let $\epsilon_0(X_i)>0$ be the biggest real number such that: any loop on $X_i$ with length smaller than $\epsilon_0$ is homotopically trivial ($i=0,1$). If the Hausdorff distance between $X_0$ and $X_1$, $dist_{H}(X_0,X_1)$, is smaller than $\displaystyle \frac{\epsilon_0(X_0)}{20C^2}$, then there are based points $x_0\in X_0$, $x_1\in X_1$ and an epimorphism from the fundamental group $\pi_1(X_1,x_1)$ onto the fundamental group $\pi_1(X_0,x_0)$.  Moreover, if $dist_{H}(X_0,X_1)<\frac{1}{20C^2}\min \{\varepsilon_0(X_0),\varepsilon_0(X_1)\}$ we can take the above epimorphism being an isomorphism.
\end{customthm}

As an important consequence, we obtain that if $X$ is a subanalytic set which is LNE at $0$ and has connected link at $0$ then there is an epimorphism from the fundamental group of the link of $X$ at $0$ onto the fundamental group of the link of its tangent cone at $0$ (see Corollary \ref{Fundamental_thm_local}).

Next result, which is presented in Section \ref{sec:main_results}, is concerning to smoothness of complex algebraic sets; It comes as a consequence of Theorem \ref{Fundamental_thm}.

% We say that a path connected subset $A\subset\R^n$ is {\it link Lipschitz normally embedded} (LLNE) at $p$ if there exist constant $C\geq 1$ and $\delta>0$ such that $A_t:=[\frac{1}{t}(A-p)]\cap \mathbb{S}^{n-1}$ is $C$-LNE for all $0<t\leq \delta$. In this case, we say also that $A$ is C-LLNE at $p$.

\begin{customthm}{\ref*{main_result}}
	Let $X\subset\mathbb{C}^n$ be a complex analytic set of dimension $k>1$ at $0\in X$. If $X$ is $C$-LNE at $0$ and its link at $0$ is simply connected, then the following conditions are mutually equivalent:
	\begin{enumerate}
		\item $X$ is $2k$-homology manifold and locally linearly contractible at $0$ (see Definitions \ref{def:homolgy_manifold} and \ref{def:llc});
		\item $X$ has no choking cycles at $0$ and its link at $0$ has trivial $i$-th integral homology group for all $2\leq i\leq 2k-2$ (see Definition \ref{def:choking_cycles});
		\item The link of the tangent cone of $X$ at $0$ has trivial $i$-th integral homology group for all $2\leq i\leq 2k-2$;
		\item $X$ is smooth at $0$.
	\end{enumerate}
    In particular, if any of the above items holds true, we get that $X_t:=(\frac{1}{t} X)\cap \mathbb S^{2m-1}$ is diffeomorphic to $\mathbb S^{2k-1}$ for all small enough $t>0$.
\end{customthm}

In Section \ref{sec:main_results} is also presented a proof of  Theorem \ref{main_result} and several consequences of this theorem.
In order to present the main consequence, let us remind that a subanalytic set $A\subset\R^n$ is {\it metrically conical (at $0$)} if there exist $\varepsilon>0$ and a bi-Lipschitz homeomorphism $h\colon A\cap \overline{B_{\varepsilon}(0)}\to Cone(A\cap \mathbb{S}^{n-1}_{\varepsilon})$ such that $\|h(x)\|=\|x\|$ for all $ A\cap \overline{B_{\varepsilon}(0)}$ and $A$ is {\it locally metrically conical (at $0$)} if for all $v\in C(A,0)\cap \mathbb{S}^{n-1}$ there exists $\delta>0$ such that $Cone_1(B_{\delta}(v))\cap A$ is metrically conical (at $0$), where $\overline{B_{\varepsilon}(x_0)}=\{x\in \R^n;\|x-x_0\|\leq \varepsilon\}$, $\mathbb{S}^{n-1}_{\varepsilon}=\{x\in \R^n;\|x\|= \varepsilon\}$ and $Cone_1(X)=\{tx;t\in [0,1]$ and $x\in X\}$.

\begin{customthm}{\ref*{main_consequence}}
	Let $X\subset\mathbb{C}^n$ be a complex analytic set of dimension $k$ with isolated singularity at $0$. Then, $X$ is smooth at $0$ if and only if $X$ is locally metrically conical at $0$ and its link at $0$ is $(2k-2)$-connected.
\end{customthm}

Theorem \ref{main_consequence} gives a partial answer for Neumann-Pichon's conjecture about metrically conical sets, which says the following:

\begin{conjecture}
 Let $X\subset\mathbb{C}^n$ be a complex analytic set with isolated singularity at $0$. Then,  $X$ is metrically conical at $0$ if and only if  $X$ is locally metrically conical at $0$.
\end{conjecture}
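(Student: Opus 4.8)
The plan is to treat the two implications separately; I expect the reverse one, \emph{locally metrically conical $\Rightarrow$ metrically conical}, to carry essentially all of the difficulty.

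The forward implication, \emph{metrically conical $\Rightarrow$ locally metrically conical}, should be routine. Given $\varepsilon>0$ and a norm-preserving bi-Lipschitz homeomorphism $h\colon X\cap\overline{B_\varepsilon(0)}\to Cone(X\cap\mathbb{S}^{2n-1}_\varepsilon)$, I would transport this global radial structure to each solid sector $P=Cone_1(B_\delta(v))\cap X$, $v\in C(X,0)\cap\mathbb{S}^{2n-1}$. The restriction $h|_P$ is already a norm-preserving bi-Lipschitz homeomorphism of $P$ onto a subset of the straight cone $Cone(L_\varepsilon)$, $L_\varepsilon:=X\cap\mathbb{S}^{2n-1}_\varepsilon$, and it remains only to radially straighten that image onto $Cone(P\cap\mathbb{S}^{2n-1}_\varepsilon)$; since $h$ distorts Euclidean directions by a bounded bi-Lipschitz factor, for $\delta$ small this is a bounded, norm-preserving correction. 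I do not expect this direction to present any real obstacle.

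For the reverse implication I would use the isolated-singularity hypothesis crucially: for small $\varepsilon$ the link $L_\varepsilon$ is a compact smooth manifold and $X\setminus\{0\}$ is smooth, so the compact sphere of directions $C(X,0)\cap\mathbb{S}^{2n-1}$ admits a finite cover by balls $B_{\delta_i}(v_i)$, $i=1,\dots,N$, on each of which the sector $P_i=Cone_1(B_{\delta_i}(v_i))\cap X$ is metrically conical. The key device is to encode a metric cone structure by a \emph{flow} rather than by a homeomorphism: on $X\setminus\{0\}$ near $0$ I seek a vector field $\xi$, tangent to $X$, outward-transverse to the spheres and normalized by $d\|x\|(\xi)\equiv 1$, with a scale-invariant Lipschitz bound; integrating $\xi$ and then rescaling radially produces a norm-preserving cone map which is bi-Lipschitz precisely when the flow is. From each local structure I would extract such a field $\xi_i$ on $P_i$ (using subanalyticity to arrange enough regularity), and then glue by a conical, scale-invariant partition of unity $\{\rho_i\}$ subordinate to $\{Cone_1(B_{\delta_i}(v_i))\}$, setting $\xi=\sum_i\rho_i\,\xi_i$. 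Working with fields rather than with the maps directly is what makes gluing conceivable: the admissible fields (tangent, outward, with $d\|x\|(\,\cdot\,)\equiv1$) form an affine space, so tangency and the normalization survive the convex combination, since $\sum_i\rho_i\equiv1$.

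The hard part will be the uniform bi-Lipschitz control of the glued field on the overlaps, which is exactly why the statement is only conjectural. Using $\sum_i\nabla\rho_i=0$, the dangerous term in $\nabla\xi$ can be written $\sum_i(\nabla\rho_i)\otimes(\xi_i-\bar\xi)$ for an arbitrary reference field $\bar\xi$, and is therefore controlled by $\max_i|\nabla\rho_i|$ — of the order of the reciprocal of the overlap width — times the overlap discrepancy $\max_{i,j}\sup_{P_i\cap P_j}|\xi_i-\xi_j|$ between the non-canonical local normalizations. Nothing in the hypothesis of local metric conicality forces this discrepancy to be small compared with the overlap width, so the convex combination can lose both its upper and its lower Lipschitz bounds. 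Making the argument go through would require the local cone structures to agree up to a controlled error on overlaps, a cocycle-type compatibility that local metric conicality does not supply a priori; I expect that establishing such control, presumably through subanalyticity (\L ojasiewicz inequalities and a sector/pancake decomposition adapted to the cover), is the genuine content of the conjecture. As a consistency check, when the link is $(2k-2)$-connected the difficulty evaporates: by Theorem~\ref{main_consequence} local metric conicality already forces $X$ to be smooth at $0$, and a smooth germ is metrically conical, being $C^1$-equivalent to its tangent plane, a linear cone — so the conjecture holds in that regime, and the obstruction is concentrated precisely in the low-connectivity links.
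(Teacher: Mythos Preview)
The statement you are attempting is not a theorem in the paper but an open \emph{conjecture} (attributed there to Neumann and Pichon). The paper offers no proof of it; it only records a partial answer, namely Theorem~\ref{main_consequence}, which settles the special case in which the link is $(2k-2)$-connected by showing that in that regime local metric conicality already forces smoothness. So there is no ``paper's proof'' to compare your proposal against.

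To your credit, you are aware of this: you explicitly say ``which is exactly why the statement is only conjectural'' and you correctly isolate the obstruction in the reverse implication --- the lack of any a~priori control on the discrepancy $\max_{i,j}\sup_{P_i\cap P_j}|\xi_i-\xi_j|$ relative to the overlap width, which is precisely what a partition-of-unity gluing of flow fields would need. Your consistency check against Theorem~\ref{main_consequence} is also apt. What you have written is therefore an honest outline of a strategy together with a clear statement of where it stalls, not a proof; and that is the right thing to produce for a statement that the paper itself leaves open.

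One caution on the direction you label ``routine'': the restriction $h|_P$ of the global cone map to a sector $P=Cone_1(B_\delta(v))\cap X$ lands in $Cone(L_\varepsilon)$, but its image $h(P)$ is \emph{not} in general a straight sub-cone, and the ``radial straightening'' you invoke is not automatic. The radial projection in $Cone(L_\varepsilon)$ sends $h(P\cap\mathbb{S}_t)$ to a subset of $L_\varepsilon$ that depends on $t$, so composing with it does not land you in $Cone(P\cap\mathbb{S}_\varepsilon)$ without further argument (you would need, at minimum, that these $t$-slices stabilise up to a bi-Lipschitz correction that is uniform in $t$). This is probably repairable --- for instance via the pulled-back radial flow $h^{-1}\circ\varphi_s\circ h$ together with a nearest-point retraction in the link --- but it is not the one-line step you present it as.
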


%
% \begin{customthm}{\ref*{main_result_infinity}}
% 	Let $X\subset\mathbb{C}^n$ be a complex algebraic set of dimension $k>1$. If there exists $R>0$ such that all the sets $\displaystyle X_t=\frac{1}{t}X\cap\mathbb{S}^{2n-1}, \ t\geq R$ are $C$-LNE and its link at infinity is simply connected, then the following conditions are multually equivalent:
% 	\begin{enumerate}
% 		\item $X$ has no choking cycles at infinity and its link at infinity has trivial $i$-th integral homology group for all $2\leq i\leq 2k-2$ (see Definition \ref{def:choking_cycles});
% 		\item The link of the tangent cone of $X$ at infinity has trivial $i$-th integral homology group for all $2\leq i\leq 2k-2$;
% 		\item $X$ is a linear affine subspace of $\mathbb{C}^n$.
% 	\end{enumerate}
% 	In particular, if any of the above items holds true, we get that for each $p\in X$, $X_t:=[\frac{1}{t} (X-p)]\cap \mathbb S^{2m-1}$ is isometric to $\mathbb S^{2m-1}$ for all $t>0$.
% \end{customthm}

% It is important to mention the second named author proved in a recent work in progress jointly with R. Mendes that notions of LNE and LLNE coincide each other for semialgebraic sets with connected links.
% So, once that work in progress is finished we may state Theorem 4.2 under the assumption LNE instead of LLNE.

\bigskip

\noindent{\bf Acknowledgements}. Alexandre Fernandes wishes to express his gratitude to BCAM for its hospitality and support during the final part of preparation of this paper. We wish to thank Lev Birbrair for his suggestions on the introduction of this paper. We wish to thank Pepe Seade for his interest in this paper.

\section{Preliminaries}

All the subsets of $\R^n$ or $\mathbb{C}^n$ considered in the paper are supposed to be equipped with the Euclidean distance. When we consider the inner distance, it is clearly emphasized.

Given a path connected subset $X\subset\R^n$, the
\emph{inner distance}  on $X$  is defined as follows: given two points $x_1,x_2\in X$, $d_X(x_1,x_2)$  is the infimum of the lengths of paths on $X$ connecting $x_1$ to $x_2$.

\begin{definition}\label{def:lne}
Let $X\subset\R^N$ be a subset. We say that $X$ is {\bf Lipschitz normally embedded (LNE)} if there exists a constant $C\geq 1$ such that $d_{X}(x_1,x_2)\leq C\|x_1-x_2 \|$, for all pair of points $x_1,x_2\in X$. In this case, we also say that $X$ is $C$-LNE. We say that $X$ is {\bf LNE at $p\in \R^N$}  if there exists an open neighborhood $U$ of $p$ such that $X\cap U$ is LNE. We say that $X$ is {\bf LNE at infinity} if there exists a compact subset $K\subset\mathbb{R}^N$ such that $X\setminus K$ is LNE.
\end{definition}

For instance, considering the real (resp. complex) cusp $x^2=y^3$, in $\R^2$ (resp. in $\mathbb{C}^2$), one can see that this set is not LNE.

Let us remark that the definition of LNE set was introduced by L. Birbrair and T. Mostowski \cite{BirbrairM:2000}, where they just call it by normally embedded set.

In the paper \cite{MendesS:2021} was introduced the following related definition:

\begin{definition}\label{def:link_lne}
Let $X\subset\R^N$ be a subset, $p\in \R^N$ and $X_t:=[\frac{1}{t}(X-p)]\cap \mathbb{S}^{N-1}$ ($X_t:=(\frac{1}{t}X)\cap \mathbb{S}^{N-1}$) for all $t>0$. We say that $X$ is {\bf link Lipschitz normally embedded (LLNE) at $p$} (resp. LLNE at infinity) if there exist constants $C\geq 1$ and $\delta >0$ (resp. $R>0$) such that $X_{t}$ is $C$-LNE for all $0<t\leq \delta$ (resp. $t\geq R$). In this case, we also say  that $X$ is $C$-LLNE at $p$ (resp. $C$-LLNE at infinity).
\end{definition}

The second named author of this paper joint with R. Mendes proved in \cite[Theorem 1.1]{MendesS:2021} that LNE and LLNE notions are equivalent for subanalytic set-germs with connected links.
% \begin{definition}
% Let $X\subset\R^N$ be a subanalytic set, $p\in X$ and $X_t:=(\frac{1}{t}(X-p))\cap \mathbb{S}^{N-1}$ for all $t>0$. We say that $X$ is {\bf uniformly link Lipschitz normally embedded (at $p$)} or shortly LLNE (at $p$) if there is a constant $C>0$ such that $d_{X_t}\leq C\|\cdot \|$, for all $t>0$ enough small.
% \end{definition}

\begin{example}
Let $X\subset\mathbb{C}^m$ be a complex analytic set with isolated singularity at $0\in X$. Then, for all small enough $t>0$, the set
$X_t=(\frac{1}{t}X)\cap\mathbb{S}^{2m-1}$
is a compact and smooth manifold. In particular, there exists $C_t\geq 1$ such that $d_{X_t}(x,y)\leq C_t \|x-y\|$ for all $x,y\in X_t$. Thus, when such $C_t$ does not depend on $t$, we are just in the case that $X$ is LLNE at $0$.
\end{example}

\begin{example}
Let $X\subset\R^m$ be a subanalytic subset. If there exists a subanalytic bi-Lipschitz homeomorphism $\psi\colon (X,0)\to (\R^d,0)$ ($d>1$) then $X$ is LLNE at $0$.
\end{example}
\begin{example}
Let $X\subset\R^m$ be a subset. If $X$ is an $n$-dimensional smooth submanifold ($n>1$) then it is LLNE at any point $x\in X$.
\end{example}

% \begin{definition} A subset $X\subset\R^n$ is called {\bf $\alpha$-H\"older regular} if $X$ is bi-$\alpha$-H\"older homeomorphic to Euclidean ball $\mathbb{B}_1(0)$.
% \end{definition}

% \textcolor{red}{\begin{lemma}\label{neighborhood}
% If $X$ is $\alpha$-H\"older regular for $\alpha$ sufficiently closed of 1, then there exists a neighborhood $U$ of $p$ in $X$  which is normally embedded in $\mathbb{C}^m$.
% \end{lemma}}

% \begin{remark}
% {\rm $m(X)=1$ iff $(X,0)$ is smooth (holomorphic).}
% \end{remark}

\begin{definition}\label{def:tg_cone}
Let $X\subset \R^{n}$ be a locally compact subset and let $p\in X$.
We say that $Z\subset \R^{n}$ is {\bf a tangent cone} of $X$ at $p$ (at infinity) if there is a sequence $\{t_j\}_{j\in \N}$ of positive real numbers such that $\lim\limits_{j\to \infty} t_j=0$ (resp. $\lim\limits_{j\to \infty} t_j=+\infty$) and the sequence of sets $\frac{1}{t_j}(X-p)$ (resp. $\frac{1}{t_j}X$) converges locally in the Hausdorff distance to $Z$. When $X$ has a unique tangent cone at $p$ (resp. at infinity), we denote it by $C(X,p)$ (resp. $C_{\infty}(X)$) and we call $C(X,p)$ (resp. $C_{\infty}(X)$) {\bf the tangent cone of $X$ at $p$} (resp. {\bf the tangent cone of $X$ at infinity}).
\end{definition}
\begin{remark}
Let $X\subset \mathbb{C}^n$ be a complex analytic (resp. algebraic) set and $x_0\in X$. In this case, $C(X,x_0)$ (resp. $C_{\infty}(X)$) is the zero set of a set of complex homogeneous polynomials, we can see this in \cite[Theorem 4D]{Whitney:1972} (resp. \cite[Theorem 1.1]{LeP:2016}). In particular, $C(X,x_0)$ (resp. $C_{\infty}(X)$) is the union of complex lines passing through the origin $0\in\mathbb{C}^n$.
\end{remark}

\begin{definition}
A subanalytic set $A\subset\R^n$ is {\bf metrically conical (at $0$)} if there exist $\varepsilon>0$ and a bi-Lipschitz homeomorphism $h\colon A\cap \overline{B_{\varepsilon}(0)}\to Cone(A\cap \mathbb{S}^{n-1}_{\varepsilon})$ such that $\|h(x)\|=\|x\|$ for all $ A\cap \overline{B_{\varepsilon}(0)}$ and $A$ is {\bf locally metrically conical (at $0$)} if for all $v\in C(A,0)\cap \mathbb{S}^{n-1}$ there exists $\delta>0$ such that $Cone(B_{\delta}(v))\cap A$ is metrically conical (at $0$), where $\overline{B_{\varepsilon}(x_0)}=\{x\in \R^n;\|x-x_0\|\leq \varepsilon\}$, $\mathbb{S}^{n-1}_{\varepsilon}=\{x\in \R^n;\|x\|= \varepsilon\}$ and $Cone(X)=\{tx;t\in [0,1]$ and $x\in X\}$.
\end{definition}

There also is the notion of a set to be (locally) metrically conical with respect to inner distance, but we do not approach it here.

% \section{Homology and Homotopy of \texorpdfstring{$C(X,p)\setminus \{0\}$}{C(X,p)-\{0\}}}
An algebraic subset of $\mathbb{C}^m$ is called a \emph{complex cone} if it is a union of one-dimensional linear subspaces of $\mathbb{C}^m$. Next result was proved by D. Prill in \cite{Prill:1967}.

\begin{lemma}[\cite{Prill:1967}, Theorem, p. 180] \label{prill}
Let $V\subset \mathbb{C}^m$ be a complex cone of dimension k with
$\pi_j(V\setminus \{0\})=0$ for $0\leq j\leq 2k-2$. Then $V$ is a linear subspace of $\mathbb{C}^m$.
\end{lemma}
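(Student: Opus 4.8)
The plan is to pass from the cone $V$ to the projective variety it defines and to read off its degree directly from the connectivity hypothesis, using the Gysin sequence of a tautological circle bundle.

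First I would set $X:=\mathbb{P}(V)\subset\mathbb{CP}^{m-1}$, the image of $V\setminus\{0\}$ under the projection $\mathbb{C}^m\setminus\{0\}\to\mathbb{CP}^{m-1}$; this is a projective variety of complex dimension $n:=k-1$, and it is connected because $V\setminus\{0\}$ is (from $\pi_0(V\setminus\{0\})=0$). The structural observation that drives everything is that $V\setminus\{0\}$ is precisely the complement of the zero section in the total space of the tautological line bundle $\mathcal{O}_X(-1)=\mathcal{O}_{\mathbb{CP}^{m-1}}(-1)|_X$: the correspondence $(\ell,v)\mapsto v$ identifies $\{v\neq 0: v\in\ell,\ [\ell]\in X\}$ with $V\setminus\{0\}$. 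Hence, normalizing $\|v\|$, the set $V\setminus\{0\}$ deformation retracts onto the unit circle bundle $S^1\hookrightarrow E\to X$ of $\mathcal{O}_X(-1)$, whose Euler class is $e=c_1(\mathcal{O}_X(-1))=-H$, where $H\in H^2(X;\mathbb{Z})$ is the hyperplane class. (The case $k=1$ I would treat directly: there $X$ is a finite set, connected by hypothesis, so $V$ is a single line.)

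Next I would feed the hypothesis into the Gysin sequence
$$\cdots\to H^{i}(X)\xrightarrow{\cup e}H^{i+2}(X)\to H^{i+2}(E)\to H^{i+1}(X)\xrightarrow{\cup e}H^{i+3}(X)\to\cdots$$
of this circle bundle. Since $V\setminus\{0\}\simeq E$ is $(2n)$-connected, $\widetilde{H}^{j}(E;\mathbb{Z})=0$ for all $1\le j\le 2n$. Reading the sequence in this range shows that $\cup e\colon H^{i}(X)\to H^{i+2}(X)$ is an isomorphism for $0\le i\le 2n-2$ and that $H^{2i+1}(X)=0$ for $2i+1\le 2n-1$. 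Starting from $H^0(X)=\mathbb{Z}\cdot 1$ this gives $H^{2i}(X;\mathbb{Z})=\mathbb{Z}\cdot e^{i}$ for $0\le i\le n$; in particular $H^{2n}(X;\mathbb{Z})\cong\mathbb{Z}$ is generated by $e^{n}$. In other words $X$ has, through degree $2n=\dim_{\mathbb{R}}X$, the integral cohomology ring of $\mathbb{CP}^{n}$, with $e$ a generator of $H^2$.

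Finally I would convert this into a degree computation. Because $e=-H$, the class $H$ also generates $H^2(X;\mathbb{Z})$ and $H^{n}=(-1)^n e^{n}$ generates $H^{2n}(X;\mathbb{Z})\cong\mathbb{Z}$. The degree of $X$ in $\mathbb{CP}^{m-1}$ equals $\deg X=\langle H^{n},[X]\rangle$; since $H^{n}$ is a generator and the pairing $\langle\,\cdot\,,[X]\rangle\colon H^{2n}(X;\mathbb{Z})\to\mathbb{Z}$ is an isomorphism, $\deg X=\pm 1$, hence $\deg X=1$. A projective variety of degree one is a linear subspace $\mathbb{CP}^{n}\subset\mathbb{CP}^{m-1}$, so $X=\mathbb{P}(V)$ is linear and its affine cone $V$ is a $k$-dimensional linear subspace of $\mathbb{C}^m$, as claimed. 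The step I expect to require the most care is this passage from topology to degree while allowing $X$ to be singular or reducible. The Gysin argument itself is robust, since $E\to X$ is a genuine topological $S^1$-bundle over the compact space $X$ and no smoothness is used; the delicate points are the identification of the Euler class with $-H$ (justified by the tautological-bundle description above) and the reduction to the irreducible case. For the latter I would observe that the $(2k-2)$-connectedness of $V\setminus\{0\}$ already forces $H^{2n}(X;\mathbb{Z})\cong\mathbb{Z}$, leaving no room for a second top-dimensional component, so that the degree-one cycle $[X]$ is supported on a single linear $\mathbb{CP}^{n}$; alternatively one may take $V$ irreducible from the outset, as the statement is typically applied.
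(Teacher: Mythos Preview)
The paper does not prove this lemma; it is quoted verbatim as Prill's theorem and used as a black box. So there is no ``paper's own proof'' to compare against---your write-up is being measured against Prill's original argument.

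Your approach is correct and is essentially Prill's: view $V\setminus\{0\}$ as the $\mathbb{C}^*$-bundle (equivalently, $S^1$-bundle) over $X=\mathbb{P}(V)$ associated to $\mathcal{O}_X(-1)$, run the Gysin sequence using the $(2k-2)$-connectedness of the total space, deduce that $H^*(X;\mathbb{Z})$ agrees with $H^*(\mathbb{CP}^{k-1};\mathbb{Z})$ through top degree with $H$ as generator, and conclude $\deg X=1$. The Gysin sequence is valid here with no smoothness assumption on $X$, so your main concern is well placed but not an obstruction.

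Two small points worth tightening. First, to pass from ``$H^{n}$ generates $H^{2n}(X;\mathbb{Z})$'' to ``$\deg X=1$'' you implicitly use that $[X]$ generates $H_{2n}(X;\mathbb{Z})$; this follows because the singular locus of an irreducible $n$-dimensional projective variety has real codimension at least two, so $H_{2n}(X)\cong H_{2n}(X,\mathrm{Sing}\,X)\cong\mathbb{Z}\cdot[X]$. Second, irreducibility (and pure-dimensionality) of $X$ is forced by the computation itself: your Gysin argument gives $H_{2n}(X;\mathbb{Z})\cong\mathbb{Z}$, while a variety with several top-dimensional components would have $H_{2n}$ of higher rank. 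Once you make these two remarks explicit the argument is complete.
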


\begin{definition}\label{def:ahlfors}
A metric space is called {\bf locally Ahlfors n-regular} if it has Hausdorff dimension $n$ and if for every compact subset $K$ of the space there exist numbers $r_K > 0$ and $C_K \geq 1$ such that
$$
C_K^{-1} r^n \leq  \mathcal{H}_n(B_r(x)) \leq  C_Kr^n,
$$
for every metric ball $B_r(x)$ in the space, with center $x\in K$ and radius $r < r_K$, where $\mathcal{H}_n(A)$ denotes the $n$-dimensional Hausdorff measure of $A$.
\end{definition}

\begin{definition}\label{def:llc}
A metric space is said {\bf locally linearly contractible (LLC)} if for every compact set $K$ in the space there exist numbers $r_K > 0$ and $C_K \geq  1$ such that every metric ball $B_r(x)$ in the space, with center $x\in K$ and radius $r < r_K$, contracts to a point inside $B_{C_Kr}(x)$.
\end{definition}

\begin{definition}\label{def:homolgy_manifold}
A separable and metrizable space $X$ is a {\bf homology $n$-manifold}, $n \geq  2$, if $X$ is finite dimensional, locally compact and locally contractible, and if the integral homology groups satisfy
$$
H_*(X; X \setminus \{x\}) = H_*(\R^n; \R^n\setminus \{0\})
$$
for each $x\in X$.
\end{definition}

\begin{definition}
We say that $X$ is an LLC homology $n$-manifold at $p\in X$ if there is an open neighborhood $U\subset X$ of $p$ such that $X\cap U$ is a homology $n$-manifold and LLC.
\end{definition}

\begin{definition}
A metric space is said {\bf $n$-rectifiable} if it can be covered, up to a set of Hausdorff $n$-measure zero, by a countable number of Lipschitz images of subsets of $\R^n$. We call a metric space {\bf metrically n-dimensional} if it is $n$-rectifiable and locally Ahlfors $n$-regular.
\end{definition}

\begin{lemma}[Proposition 4.1 in \cite{Heinonen:2011}]\label{heinonen}
Let $X$ be a metrically $n$-dimen\-si\-o\-nal and locally linearly contractible homology $n$-manifold, and let $p $ be a point in $X$. Then $C(X,p)$ is a homology $n$-manifold.
% satisfying the following global versions of Ahlfors regularity and linear local contractibility: there exists a constant $C \geq 1$, depending only on the local data of $X$ near $p \in X$, such that $B_r(x)\cap C(X,p)$ contracts to a point in $B_{Cr}(x)\cap C(X,p)$ and that
% $$
% C^{-1} r^n \leq  \mathcal{H}_n(B_r(x)) \leq  Cr^n,
% $$
% whenever $x \in C(X,p)$ and $0 < r <\infty$. In particular, $C(X,p)$ is metrically $n$-dimensional.
\end{lemma}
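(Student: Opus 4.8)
\emph{The plan.} I would realize $C(X,p)$ as a limit, taken locally in the Hausdorff distance, of the rescalings $X_j:=\frac{1}{t_j}(X-p)$, $t_j\downarrow 0$, and then check that each condition in Definition~\ref{def:homolgy_manifold} passes to the limit; the only delicate one is the local homology condition, which I would handle by a scale‑persistence argument using that $X$ is a homology $n$-manifold together with the \emph{uniform} control provided by local Ahlfors regularity (Definition~\ref{def:ahlfors}) and local linear contractibility (Definition~\ref{def:llc}).

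\emph{Existence and point-set properties.} First I would fix a compact neighborhood $K$ of $p$ on which $X$ is $C_K$-LLC at scales $<r_K$ and Ahlfors $n$-regular with comparable constants. Both properties are invariant under rescaling, so the $X_j$ are uniformly $C_K$-LLC and uniformly Ahlfors $n$-regular on every fixed ball, valid up to radius $r_K/t_j\to\infty$; by Hausdorff compactness in $\R^n$ a subsequence converges locally in the Hausdorff distance to a closed set $Y$ — and $C(X,p)$, when it exists, is such a $Y$. A closed subset of $\R^n$ is separable, metrizable and locally compact; Ahlfors $n$-regularity survives a weak-$*$ limit of the rescaled measures (a standard semicontinuity argument, at the cost of slightly worse constants), so $Y$ has Hausdorff dimension $n$ and hence finite topological dimension; and a Hausdorff limit of uniformly $C_K$-LLC sets is $C'$-LLC — one transports the contracting homotopies across the Hausdorff approximations using the LLC property of the approximants — so $Y$ is locally contractible. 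Hence $Y$ meets every requirement of Definition~\ref{def:homolgy_manifold} except possibly $H_*(Y,Y\setminus\{y\})\cong H_*(\R^n,\R^n\setminus\{0\})$.

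\emph{Reduction and the persistence argument.} Since $Y$ is LLC, excision and the long exact sequence of the pair give, for small $r$, $H_*(Y,Y\setminus\{y\})\cong\tilde{H}_{*-1}(B^Y_r(y)\setminus\{y\})$, compatibly as $r\downarrow 0$, so it is enough to show that each small punctured ball $B^Y_r(y)\setminus\{y\}$ has the integral \v{C}ech cohomology of $\Sp^{n-1}$, coherently in $r$. I would write $y$ as the limit in $X_j$ of points $y_j\in X_j$ coming from $x_j:=p+t_jy_j\in X$ with $x_j\to p$; then $B^Y_r(y)\setminus\{y\}$ is the local Hausdorff limit of $B^{X_j}_r(y_j)\setminus\{y_j\}$, which rescales to $B^X_{rt_j}(x_j)\setminus\{x_j\}\subset X$. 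Because $X$ is a homology $n$-manifold, every $x_j$ is a homology-manifold point, and combined with $C_K$-LLC and Ahlfors $n$-regularity on $K$ this should force the local cohomology to stabilize at a scale depending only on $C_K,r_K,n$; so $B^X_s(x_j)\setminus\{x_j\}$ is a \v{C}ech cohomology $\Sp^{n-1}$ for all $s$ below that \emph{uniform} threshold. Finally I would transport this across the Hausdorff convergence: cover the annular part of $B^Y_r(y)$ by finitely many balls of radius well below the LLC threshold, so that LLC makes it a good cover whose nerve computes \v{C}ech cohomology, and note that a Hausdorff-close set carries a good cover with the same nerve; this gives $\check{H}^*(B^Y_r(y)\setminus\{y\})\cong\check{H}^*(\Sp^{n-1})$, coherently in $r$, with the cone point $y=0$ handled the same way using $x_j\equiv p$. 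Hence $H_*(Y,Y\setminus\{y\})\cong H_*(\R^n,\R^n\setminus\{0\})$ for every $y\in Y$, i.e.\ $C(X,p)$ is a homology $n$-manifold.

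\emph{Main obstacle.} The hard part is the last transport step: (co)homology is not stable under Hausdorff limits in general, and becomes so precisely because of the uniform local linear contractibility. Making this rigorous via the Borsuk nerve theorem forces one to choose a cover scale that is simultaneously below the LLC threshold and above the Hausdorff error $d_H(X_j,Y)$, and then to arrange the resulting isomorphisms to be compatible as $r\downarrow 0$ so that the inverse limit computing $H_*(Y,Y\setminus\{y\})$ comes out clean. A secondary difficulty is upgrading ``$x_j$ is a homology-manifold point'' to the quantitative statement that punctured balls about $x_j$ are cohomology $(n-1)$-spheres with a radius threshold uniform in $j$; this is exactly where metric $n$-dimensionality of $X$ and its LLC on the compact $K$ enter, ruling out oscillation of the local homology at arbitrarily small scales.
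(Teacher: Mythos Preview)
The paper does not give its own proof of this lemma: it is quoted verbatim as Proposition~4.1 of Heinonen--Keith \cite{Heinonen:2011} and used as a black box in the proof of Theorem~\ref{main_result}. There is therefore no in-paper argument to compare your proposal against.

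That said, your sketch is broadly along the lines of the actual Heinonen--Keith argument: realize $C(X,p)$ as a pointed Gromov--Hausdorff (here, local Hausdorff) limit of the rescalings, observe that Ahlfors $n$-regularity and the LLC property are scale-invariant and pass to such limits with controlled constants, and then use uniform LLC to force the local homology to persist. You have correctly identified the genuine difficulty, namely that (co)homology is not Hausdorff-stable in general and that the quantitative LLC is precisely what rescues this. Your nerve-theorem outline for the transport step is the right idea; what is missing to make it a proof is the careful bookkeeping that the isomorphisms on annuli are coherent under inclusion as $r\downarrow 0$, so that they assemble into the correct pro-system computing $H_*(Y,Y\setminus\{y\})$, and a precise justification that the ``uniform threshold'' for punctured balls about $x_j$ to have the right cohomology really depends only on the LLC and Ahlfors constants rather than on the point. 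Both of these are handled in \cite{Heinonen:2011}; your proposal is a faithful high-level summary of that route rather than an independent argument.
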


\begin{definition}\label{def:choking_cycles}
We say that $X\subset \R^N$ has an $n$-dimensional family of choking cycles at $0\in X$ (resp. at infinity) if:
\begin{itemize}
 \item [a)] There exist constants $\lambda\geq 1$, $\delta>0$ (resp. $R>0$);
 \item [b)] There exists a family of $n$-dimensional integral singular cycles $\{\xi_t\}_{0<t<\delta}$ (resp. $\{\xi_t\}_{t>R}$) such that $\xi_t$ is in the set
 $
 X_t:=(\frac{1}{t} X)\cap \mathbb{S}^{N-1}
 $
 and $diam |\xi_t|\to 0$ as $t\to 0$ (resp. $t\to +\infty$);
 \item [c)] if $\{\eta_t\}$ is a family of $(n+1)$-dimensional integral singular chains such that each $\eta_t$ is in $X_t$ and $\partial \eta_t=\xi_t$ for all $0<t<\delta$ (resp. $t>R$) then $\liminf diam |\eta_t|>0$.
\end{itemize}

\end{definition}
Let us mention that this object first arose in the paper \cite{BirbrairFGO} where it was proved the existence of choking horns (choking cycles where $\xi_t$ are spheres) is an obstruction to the germ being bi-Lipschitz homeomorphic to a cone.

When $X\subset \R^N$ is a subanalytic (resp. semialgebraic) subset, we denote by $L_p(X)$ (resp. $L_{\infty}(X)$) to be the link of $X$ at $p$ (resp. infinity).

% \section{Key results}

% Thus, we obtain the following
% \begin{lemma}\label{homology}
% Let $X\subset \mathbb{C}^m$ be a complex analytic set with $k=\dim X$ and let $C(X,p)$ be the tangent cone of $X$ at a point $p\in X$. If $X$ is a homology $2k$-manifold, then $C(X,p)$ is a homology $2k$-manifold. In particular, $H_j(C(X,p)\setminus\{0\})=0$, for $1\leq j\leq 2k-2$.
% \end{lemma}
% \begin{proof}
% We can suppose $p=0$. By Lemma \ref{LLC}, we have that $X$ is a metrically $2k$-dimensional and locally linearly contractible homology $2k$-manifold. Moreover, since $X$ is a complex analytic set, there exists a sequence $r_j\to 0$ such that $\frac{1}{r_j}X \to C(X,0)$ locally in the Hausdorff distance. Thus, by Lemma \ref{heinonen}, $C(X,0)$ is a homology $2k$-manifold. However, by using homological exact sequences, it is easy to see that $H_j(C(X,0)\setminus\{0\})=0$, for $1\leq j\leq n-2$.
% \end{proof}

\section{LNE sets and stability of fundamental groups }\label{sec:key_results}
\begin{remark}
Let $Y$ be a compact subanalytic set. Then, it is locally simply connected and, moreover, since it is compact, there is $\varepsilon _0>0$ such that every closed curve in it of length smaller than $\varepsilon_0$ is homotopy equivalent to a constant. We detote by $\varepsilon_0(Y)$ the supremum of all such $\varepsilon_0$ above.
\end{remark}

\begin{theorem}\label{Fundamental_thm}
Let $X_0,X_1$ be two $C$-LNE compact subanalytic subsets of $\mathbb R^m$. If $dist_{H}(X_1,X_0)<\frac{\varepsilon_0(X_0)}{20C^2}$ then there exist based points $y_0\in X_0$, $x_0\in X_1$ and an epimorphism $h\colon \pi_1(X_1,x_0)\to \pi_1(X_0,y_0)$. Moreover, if
$$dist_{H}(X_1,X_0)<\frac{1}{20C^2}\min \{\varepsilon_0(X_0),\varepsilon_0(X_1)\}$$
we can take $h$ being an isomorphism.
\end{theorem}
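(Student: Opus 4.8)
The plan is to construct explicit maps between the two spaces using nearest-point projection (or rather, a coarse approximation of it) and to control the sizes of relevant loops via the LNE condition. Since $\mathrm{dist}_H(X_0,X_1) =: \delta$ is small, for every $p \in X_1$ there is a point $q \in X_0$ with $|p-q| \le \delta$, and conversely. First I would fix such a correspondence: pick basepoints $x_0 \in X_1$, $y_0 \in X_0$ with $|x_0 - y_0| \le \delta$. For the epimorphism $h\colon \pi_1(X_1,x_0) \to \pi_1(X_0,y_0)$, given a loop $\gamma$ in $X_1$ based at $x_0$, I would subdivide it into short arcs $\gamma = \gamma_1 * \cdots * \gamma_N$, each of diameter at most (say) $\epsilon_0(X_0)/(10C)$, choose for each subdivision point $p_i \in X_1$ a companion $q_i \in X_0$ with $|p_i - q_i| \le \delta$, and join consecutive $q_i$ by a shortest path $\sigma_i$ in $X_0$. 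Using $C$-LNE of $X_0$, the length of $\sigma_i$ is at most $C|q_i - q_{i+1}| \le C(|q_i - p_i| + |p_i - p_{i+1}| + |p_{i+1} - q_{i+1}|) \le C(2\delta + \epsilon_0(X_0)/(10C))$, which, using $\delta < \epsilon_0(X_0)/(20C^2) \le \epsilon_0(X_0)/(20C)$, is comfortably below $\epsilon_0(X_0)$. The concatenation $\tilde\gamma := \sigma_1 * \cdots * \sigma_N$ is a loop in $X_0$ based at $y_0$, and I would define $h([\gamma]) := [\tilde\gamma]$.

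The main work is to check this is (a) well defined on homotopy classes and (b) a surjective homomorphism. For well-definedness, the key observation is that any two choices of companions $q_i, q_i'$ for the same $p_i$ satisfy $|q_i - q_i'| \le 2\delta$, so the shortest path between them in $X_0$ has length $\le 2C\delta < \epsilon_0(X_0)$ and hence is null-homotopic rel endpoints after closing it up with a trivial loop — this is exactly the role of $\epsilon_0(X_0)$ in the statement. A similar argument handles refinement of the subdivision and a small homotopy of $\gamma$ in $X_1$: a homotopy can be approximated by a grid of short arcs, each elementary square of which gives a loop in $X_0$ of length below $\epsilon_0(X_0)$. Surjectivity: given a loop $\beta$ in $X_0$ based at $y_0$, subdivide it finely, pull each subdivision point back to a companion in $X_1$, join consecutive companions by shortest paths in $X_1$ (here $C$-LNE of $X_1$ is used, but note the hypothesis for the epimorphism only bounds $\delta$ by $\epsilon_0(X_0)/(20C^2)$ — the factor $C^2$ rather than $C$ comes precisely from needing the $X_1$-shortest-paths to be short enough relative to $\epsilon_0(X_0)$ after being transported back, so one controls lengths in $X_1$ by $C|{\cdot}|$ and then the image lengths by another factor of $C$); one then checks $h$ of the resulting loop equals $[\beta]$, again modulo short (hence trivial) loops in $X_0$. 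Homomorphism property is immediate from concatenation compatibility once well-definedness is established.

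For the "moreover" part giving an isomorphism under the symmetric hypothesis $\delta < \frac{1}{20C^2}\min\{\epsilon_0(X_0),\epsilon_0(X_1)\}$, I would build the analogous map $h'\colon \pi_1(X_0,y_0) \to \pi_1(X_1,x_0)$ by the same construction with the roles reversed (this now requires $\delta$ small relative to $\epsilon_0(X_1)$ as well), and then verify $h \circ h' = \mathrm{id}$ and $h' \circ h = \mathrm{id}$. The composition $h' \circ h$ takes a loop $\gamma$ in $X_1$, transports it to $X_0$ and back to $X_1$; tracking basepoints and companions, the output loop differs from $\gamma$ by a sequence of short arcs whose total effect is a loop of length below $\epsilon_0(X_1)$ on each elementary piece, hence trivial, so the composite is the identity; symmetrically for $h \circ h'$. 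Since $h$ is already an epimorphism and now also has a one-sided inverse, it is an isomorphism.

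The hard part, as usual with this kind of "quasi-isometry implies isomorphism of $\pi_1$" statement, is the bookkeeping in the well-definedness argument: making precise that a homotopy in $X_1$ gets replaced by finitely many loops in $X_0$ each of controlled length, and chasing the numerical constant $20C^2$ so that all the auxiliary loops that must be killed genuinely have length below the relevant $\epsilon_0$. I expect the constant is obtained by summing the worst-case contributions ($2\delta$ from companion-switching, at most a fixed fraction of $\epsilon_0$ from the subdivision mesh, and the factor $C$ from converting Euclidean bounds to inner-length bounds in the target, plus another $C$ when the loop has to be pushed through $X_1$ first), and that $20$ is a safe over-estimate rather than an optimal value.
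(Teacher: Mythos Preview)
Your proposal is correct and follows essentially the same route as the paper: subdivide a rectifiable loop in $X_1$, choose companion points in $X_0$ within Hausdorff distance, join them by geodesics using the $C$-LNE bound, and verify well-definedness (independence of geodesics, companions, partition refinement, and homotopy via a grid argument), the homomorphism property, and surjectivity by the reverse transport --- your reading of the $C^2$ factor as arising from the two successive LNE bounds in the surjectivity step is exactly what happens in the paper. For the isomorphism, the paper builds the reverse epimorphism $\tilde h$ and checks only $\tilde h\circ h=\mathrm{id}$ (which already forces $h$ to be injective), but your plan to check both compositions is of course equally valid.
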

\begin{proof}
Take $\varepsilon  = \frac{\varepsilon_0(X_0)}{20C^2}$. % and choose $t>0$ such that $dist_{H}(X_1,X_0)<\varepsilon $.
Let $x_0\in X_1$ and $y_0\in X_0$ such that $\|x_0-y_0\|<\varepsilon$.

We define $h\colon \pi_1(X_1, x_0)\to \pi_1(X_0,y_0)$ in the following way:

Let $\omega \in \pi_1 (X_1,x_0)$. Since $X_1$ is a compact subanalytic set, $\omega$ contains some rectifiable closed curve $\alpha$. Choose now an partition $P=\{t_0=0,t_1, ... ,t_{n}=1\}$ of $[0,1]$ such that for all $i = 0, ... ,n - 1$, one has $length(\alpha|_{[t_i,t_{i+1}]}) <3C\varepsilon $.
For each $i=0, ... ,n$ choose $y_i\in X_0$ satisfying $\|x_i-y_i\| < \varepsilon $, where $x_i = \alpha(t_i)$. Moreover, we assume that $y_{i}=y_0$ whenever $x_i=x_0$. Then for each $i \in \{0, ... ,n - 1\}$, we connect $y_i$ and $y_{i+1}$ by a geodesic $\tilde\alpha_i$ in $X_0$.
This yields a broken geodesic $\tilde\alpha \colon [0,1]\to X_0$ with base point $y_0$. Thus, we set $h([\alpha])=[\tilde\alpha]$.

\begin{claim}\label{ht_welldefined_one}
$[\tilde\alpha]$ does not depend on the choice of the geodesics connecting $y_i$ and $y_{i+1}$.
\end{claim}
\begin{proof}
In fact, if $\tilde\beta_i$ is another geodesic in $X_0$ connecting $y_i$ and $y_{i+1}$, we have that
$$
\begin{array}{ll}
length(\tilde\alpha_i)=length(\tilde\beta_i)&=d_{X_0}(y_i,y_{i+1})\\
           &\leq C\|y_i-y_{i+1}\|\\
		   &\leq C(\|y_i-x_i\|+\|x_i-x_{i+1}\|+ \|x_{i+1}-y_{i+1}\|)\\
		   &< C(\varepsilon +length(\alpha|_{[t_i,t_{i+1}]})+\varepsilon)\\
		   &\leq C(\varepsilon +3C\varepsilon+\varepsilon)\\
		   &\leq 5C^2\varepsilon.
\end{array}
$$
Thus $length(\tilde\alpha_i\cdot (\tilde\beta_i)^{-1})<10C^2\varepsilon<\varepsilon_0$, which implies that $\tilde\alpha_i\cdot (\tilde\beta_i)^{-1}$ is homotopy equivalent to a constant for any $i \in \{0, ... ,n - 1\}$.  This implies that $[\tilde\alpha ]=[\tilde\beta]$, where $ \tilde\beta$ is the broken geodesic given by the $\tilde\beta_i$'s.
\end{proof}
\begin{claim}\label{ht_welldefined_two}
$[\tilde\alpha]$ does not depend on the choice of the $y_i$'s.
\end{claim}
\begin{proof}
Assume that for some $i\in \{1,...,n-1\}$, we choose another $y_i'$. We consider a geodesic $\tilde \alpha_{i-1}'$ (resp. $\tilde \alpha_{i}'$) connecting $y_{i-1}$ and $y_i'$ (resp. $y_{i}'$ and $y_{i+1}$). Thus we consider $\tilde\alpha'$ the broken geodesic given by the $\alpha_i$'s with $\tilde \alpha_{i-1}'$ and $\tilde \alpha_{i}'$ resp. instead of $\tilde \alpha_{i-1}$ and $\tilde \alpha_{i}$. We want to prove that $[\tilde\alpha]=[\tilde\alpha']$. In order to do this, we have that
$$
\begin{array}{ll}
length(\tilde\alpha_{j})&\leq C\|y_{j-1}-y_j\|\\
                        &\leq C(\|y_{j-1}-x_{j-1}\|+\|x_{j-1}-x_j\|+\|x_j-y_j\|)\\
                        &<5C^2\varepsilon,
\end{array}
$$
and similarly
$$
length(\tilde\alpha_{j}')<5C^2\varepsilon,
$$
for $j=i-1,i$. Thus,
$$length((\tilde\alpha_i\cdot\tilde\alpha_{i-1})\cdot (\tilde\alpha_i'\cdot\tilde\alpha_{i-1}')^{-1})<20C^2\varepsilon=\varepsilon_0(X_0),$$
which implies that $(\tilde\alpha_i\cdot\tilde\alpha_{i-1})\cdot (\tilde\alpha_i'\cdot\tilde\alpha_{i-1}')^{-1}$ is homotopy equivalent to a constant and then $[\tilde\alpha]=[\tilde\alpha']$.
\end{proof}
\begin{claim}\label{ht_welldefined_three}
$[\tilde\alpha]$ does not depend on the choice of the $x_i$'s.
\end{claim}
\begin{proof}
Let us consider $t_{i}<t'<t_{i+1}$ such that $x'=\alpha(t')\not\in \{x_0,....,x_n\}$. Let $y'\in X_0$ such that $\|y'-x'\|<\varepsilon$. We consider $\tilde\gamma_i$ (resp. $\tilde \gamma_i'$) a geodesic connecting $y_{i}$ and $y'$ (resp. $y'$ and $y_{i+1}$). Thus, we consider $\tilde\alpha'$ the broken geodesic given by the $\alpha_j$'s with $\tilde \gamma_{i}'\cdot \tilde\gamma_{i}$ instead of $\tilde \alpha_{i}$.

Using similar estimates as before we obtain the following
$$
length(\tilde\alpha_i\cdot (\tilde\gamma_i'\cdot\tilde\gamma_{i})^{-1})<15C^2\varepsilon<\varepsilon_0,
$$
which implies that $\tilde\alpha_i\cdot (\tilde\gamma_i'\cdot\tilde\gamma_{i})^{-1}$ is homotopy equivalent to a constant and then $[\tilde\alpha]=[\tilde\alpha']$.

Applying inductively the above argument, we see that $[\tilde\alpha]$ does not depend on the choice of the $x_i$'s.
\end{proof}

\begin{claim}\label{ht_welldefined_four}
$[\tilde\alpha]$ does not depend on the choice of the $\alpha\in \omega$.
\end{claim}
\begin{proof}
Let $\beta$ be another curve in $\omega$ such that $length(\beta)<+\infty$, then there is a homotopy $H\colon[0,1]\times [0,1] \to X_1$ between $\alpha$ and $\beta$. %Let $\varepsilon_t>0$ be the positive number such that every closed curve in $X_1$ of length less than $\varepsilon_t$ is homotopy equivalent to a constant. % Let $\varepsilon'=\min \{\varepsilon,\varepsilon_t\}$.

We can use uniform continuity of $H$ to show that there is a partition $\{r_0=0,r_1, ... ,r_{q}=1\}$ of $[0,1]$ that refines $P$ such that $\|H(r_i,\tau)-H(r_{i+1},\tau)\| <\varepsilon$ and $\|H(s,r_i)-H(s,r_{i+1})\|<\varepsilon$ for all $\tau,s\in [0,1]$ and $i\in\{0,1,...,q-1\}$. Now, for $i,j\in \{0,1,...,q\}$ we choose $y_{ij}\in X_0$ such that $\|y_{ij}-H(r_i,r_j)\|<\varepsilon$. Moreover, we assume that $y_{ij}=y_0$ whenever $H(r_i,r_j)=x_0$. Thus, for each $i\in \{0,1,...,q\}$ we define $\tilde\gamma_i\colon [0,1]\to X_0$ to be a broken geodesic such that $\tilde\gamma_{ij}=\tilde\gamma_i|_{[r_j,r_{j+1}]}$ is a geodesic connecting $y_{ij}$ and $y_{i(j+1)}$ for $j\in \{0,1,...,q-1\}$. However, by before claims, we can assume that $\tilde \gamma_0=\tilde\alpha$ and $\tilde \gamma_q=\tilde\beta$. Therefore, in order to show that $[\alpha]=[\beta]$, it is enough to show that $\tilde\gamma_i$ is homotopy equivalent to $\tilde\gamma_{i+1}$ for each $i\in\{0,1,...,q-1\}$. Thus, let $i\in\{0,1,...,q-1\}$ fixed, and for each $j\in\{0,1,...,q\}$, let $\sigma_j$ be a geodesic on $X_0$ connecting $y_{ij}$ and $y_{(i+1)j}$. By using similar estimates as before once again, for each $j\in\{0,1,...,q-1\}$ we obtain
$$
length(\tilde\gamma_{ij}\cdot (\sigma_{j+1}\cdot(\tilde\gamma_{(i+1)j})^{-1}\cdot (\sigma_j)^{-1}))<20C^2\varepsilon=\varepsilon_0(X_0).
$$
Then, $\tilde\gamma_{ij}$ and $\sigma_{j+1}\cdot(\tilde\gamma_{(i+1)j})^{-1}\cdot (\sigma_j)^{-1}$ are homotopy equivalent for each $j\in\{0,1,...,q-1\}$, which implies that $\tilde\gamma_{i}$ and $\tilde\gamma_{i+1}$ are homotopy equivalent.
\end{proof}

Therefore $h$ is well defined.

\begin{claim}\label{ht_homomorphism}
$h$ is a homomorphism.
\end{claim}
\begin{proof}
Let $\alpha$ and $\beta$ be two loops on $X_1$ based at $x_0$ such that
$$length(\alpha),length(\beta)<+\infty.$$
If $\gamma=\alpha\cdot \beta$ then a partition $P=\{t_0=0,t_1, ... ,t_{n}=1\}$ of $[0,1]$ such that $1/2=t_k$ for some $k\in \{1,...,n-1\}$ and for all $i = 0, ... ,n - 1$ one has $length(\gamma|_{[t_i,t_{i+1}]}) <3C\varepsilon $ implies that $length(\alpha|_{[2t_i-1,2t_{i+1}-1]}) <3C\varepsilon $ for all $i\in \{0,...,k-1\}$ and $length(\beta|_{[2t_i,2t_{i+1}]}) <3C\varepsilon $ for all $i\in \{k,...,n-1\}$. By before claims, we obtain
$$
h([\gamma])=[\tilde\gamma]=[\tilde\alpha\cdot \tilde \beta]=[\tilde\alpha]\cdot [\tilde \beta]=h([\alpha])\cdot h([\beta]).
$$
\end{proof}

\begin{claim}\label{ht_surjective}
$h$ is surjective.
\end{claim}
\begin{proof}
Let $\theta \in \pi_1 (X_0,y_0)$. Since $X_0$ is a compact subanalytic set, $\theta$ contains some rectifiable closed curve $\beta$. Choose now an partition $P=\{t_0=0,t_1, ... ,t_{n}=1\}$ of $[0,1]$ such that for all $i = 0, ... ,n - 1$, one has $length(\beta|_{[t_i,t_{i+1}]}) <\varepsilon $.
For each $i=0, ... ,n$ choose $x_i\in X_1$ satisfying $\|x_i-y_i\| < \varepsilon $ with $x_n=x_0$ and we connect those points by minimizing geodesics in $X_1$, where $y_i = \beta(t_i)$.
This yields a broken geodesic $\alpha \colon [0,1]\to X_1$ with base point $x_0$ such that $length(\alpha|_{[t_i,t_{i+1}]}) <3C\varepsilon $.
Since $[\tilde\alpha]=h([\alpha])$ does not depend on the choice of the points in $X_0$, we can choose these points to be $y_0,...,y_n$. Thus, by using estimates as above, we obtain that $\tilde\alpha$ and $\beta$ are homotopy equivalent. Therefore $h([\gamma])=[\beta]=\theta$.
\end{proof}
This finishes the proof of the first part of Theorem \ref{Fundamental_thm}.
For the second part, in a similar way, we can find an epimorphism $\tilde h\colon \pi_1(X_0)\to \pi_1(X_1)$ when  $dist_{H}(X_1,X_0)<\frac{1}{20C^2}\min \{\varepsilon_0(X_0),\varepsilon_0(X_1)\}$. Then, it follows from the way that we built $h$ and $\tilde h$ that $\tilde h\circ h=id$.
\end{proof}

\begin{remark}
If $\{X_t\}_{0\leq t\leq \epsilon}$ is a family of compact subanalytic subsets of $\mathbb{R}^m$ such that each $X_t$ is $C$-LNE and $\lim X_t=X_0$ (with respect to Hausdorff limit) then there exists $\delta \in (0,\epsilon)$ such that for each $t\in (0,\delta]$ there exists an epimorphism $h_t\colon \pi_1(X_t)\to \pi_1(X_0)$.
\end{remark}

\begin{example}
 It is easy to show that the Theorem \ref{Fundamental_thm} does not hold true if we remove $C$-LNE hypothesis.
 \begin{itemize}
  \item [(a)] For instance, we can obtain a family $\{S_t\}_{0\leq t\leq \epsilon}$ of compact subanalytic subsets of $\mathbb{R}^2$ such that for each $t>0$, $S_t$ is homeomorphic to $\mathbb{S}^1$, $S_0$ is homeomorphic to the wedge of two circles and $\lim S_t=S_0$ (with respect to the Hausdorff limit) and, thus, there is no epimorphism $h\colon \pi_1(S_t)\to \pi_1(S_0)$ for any $t>0$ (see Figures \ref{fig1a} and \ref{fig1b});
  \item [(b)] We can also obtain a family $\{T_t\}_{0\leq t\leq \epsilon}$ of compact subanalytic subsets of $\mathbb{R}^3$ such that for each $t>0$, $T_t$ is homeomorphic to $\mathbb{S}^1\times \mathbb{S}^1$, $T_0$ is homeomorphic to the wedge of two pinched torus and $\lim T_t=T_0$ (with respect to the Hausdorff limit) and, thus, there is no epimorphism $h\colon \pi_1(T_t)\to \pi_1(T_0)$ for any $t>0$ (see Figures \ref{fig2a} and \ref{fig2b});
 \end{itemize}
\end{example}
\begin{figure}[H]
  \begin{subfigure}{0.45\textwidth}
    \centering \includegraphics[scale=0.35]{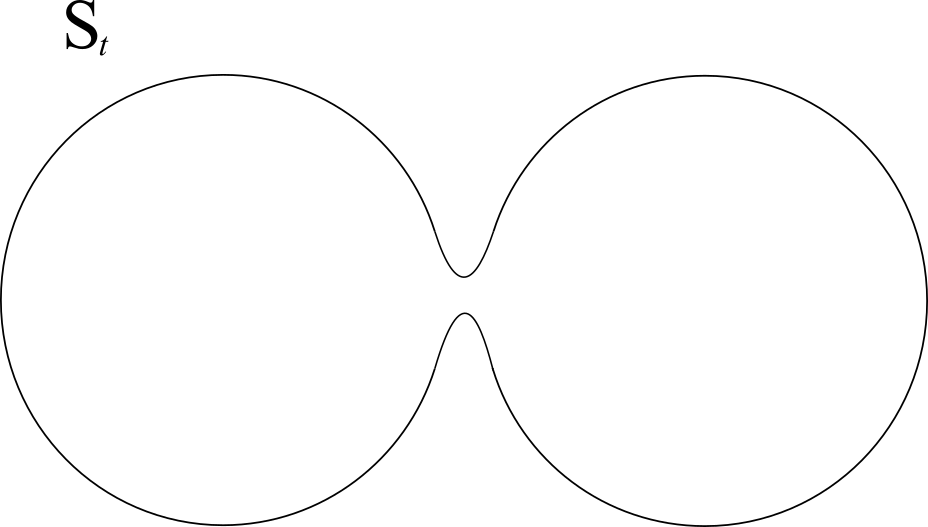}
    \caption{Topological circle}\label{fig1a}
  \end{subfigure}
\hfill
  \begin{subfigure}[c]{0.45\textwidth}
    \centering \includegraphics[scale=0.35]{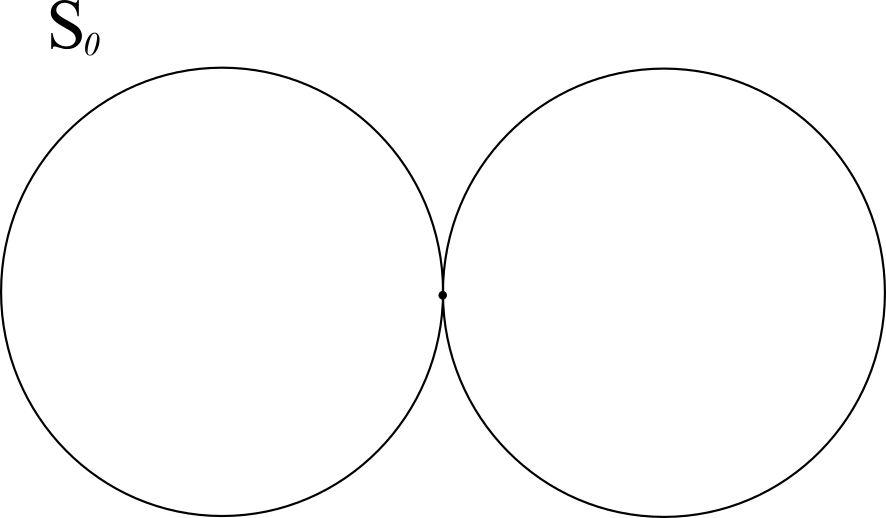}
    \caption{Wedge of two circles}\label{fig1b}
  \end{subfigure}
  \caption{Family of sets in $\mathbb{R}^2$}
\end{figure}

\begin{figure}[H]
  \begin{subfigure}[b]{0.49\textwidth}
    \centering \includegraphics[scale=0.6]{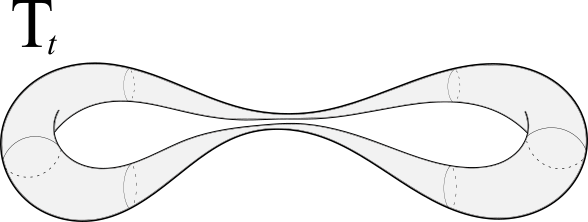}
    \caption{Topological torus}\label{fig2a}
  \end{subfigure}
  \begin{subfigure}[b]{0.49\textwidth}
    \centering \includegraphics[scale=0.6]{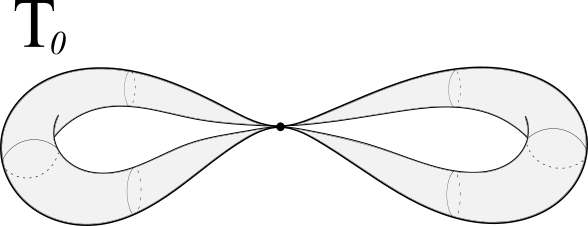}
    \caption{Wedge of two pinched torus}\label{fig2b}
  \end{subfigure}
  \caption{Family of sets in $\mathbb{R}^3$}
\end{figure}

% \subsection{On local links}\label{sec:app}
We obtain the following important consequence.
\begin{corollary}\label{Fundamental_thm_local}
Let $X\subset \mathbb R^m$ be a closed subanalytic set. Assume that $X$ is LLNE at 0. Then there exists an epimorphism $h\colon \pi_1(L_0(X))\to \pi_1(X_0)$, where $X_0=C(X,0)\cap\mathbb{S}^{m-1}$.
\end{corollary}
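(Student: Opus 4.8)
The plan is to deduce the corollary from Theorem \ref{Fundamental_thm} by realizing both $L_0(X)$ and $X_0 = C(X,0)\cap\mathbb S^{m-1}$ as (homeomorphic copies of) Hausdorff limits of a common family of uniformly $C$-LNE compact subanalytic sets, and then invoke the epimorphism part of the theorem. Concretely, set $X_t := \bigl(\tfrac1t X\bigr)\cap\mathbb S^{m-1}$ for $t>0$ small. Since $X$ is LLNE at $0$, by Definition \ref{def:link_lne} there are $C\geq 1$ and $\delta>0$ so that every $X_t$ with $0<t\le\delta$ is $C$-LNE; each $X_t$ is also compact and subanalytic. By the definition of the tangent cone, $X_t\to X_0$ in the Hausdorff metric as $t\to 0$, and $X_t$ is subanalytically homeomorphic to the link $L_0(X)$ for all sufficiently small $t>0$ (the local conical structure / Milnor fibration-type structure of subanalytic germs). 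In particular $\pi_1(X_t)\cong\pi_1(L_0(X))$ for small $t$.

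First I would check that $X_0$ itself is $C$-LNE: it is the Hausdorff limit of $C$-LNE subanalytic sets, and LNE-ness with a fixed constant passes to Hausdorff limits of compact subanalytic sets (approximate any rectifiable path in $X_0$ by nearby paths in $X_t$, whose inner length is controlled by $C$ times the Euclidean distance, then pass to the limit). Also $X_0$ is compact and subanalytic, being the link of the tangent cone. Thus Theorem \ref{Fundamental_thm} applies with $X_0$ (the cone link) in the role of "$X_0$" and $X_t$ in the role of "$X_1$". Since $\varepsilon_0(X_0)>0$ is fixed while $dist_H(X_t,X_0)\to 0$ as $t\to 0$, we can pick $t$ small enough that $dist_H(X_t,X_0)<\varepsilon_0(X_0)/(20C^2)$; the theorem then yields an epimorphism $\pi_1(X_t)\twoheadrightarrow\pi_1(X_0)$. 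Composing with the isomorphism $\pi_1(L_0(X))\cong\pi_1(X_t)$ gives the desired epimorphism $h\colon\pi_1(L_0(X))\to\pi_1(X_0)$.

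The main obstacle I expect is the bookkeeping needed to apply Theorem \ref{Fundamental_thm} cleanly, rather than any deep new idea: one must make sure that (i) the connectedness hypothesis implicit in the theorem (the sets are compact subanalytic, hence the $\varepsilon_0$ in the preceding Remark is well-defined and positive) holds for $X_0$, which requires $X_0$ to be connected — this should follow since $X$ has connected link at $0$ and $X_t$ is homeomorphic to that link, so the limit $X_0$ is a limit of connected sets and one argues it is connected; and (ii) the identification of $X_t$ with $L_0(X)$ up to subanalytic homeomorphism is uniform for small $t$, which is the standard local conical triviality of subanalytic germs. A secondary point worth stating carefully is that we only get an epimorphism, not an isomorphism, because we have no a priori lower bound on $\varepsilon_0(X_0)$ relative to $\varepsilon_0(X_t)$ — so only the first, one-sided conclusion of Theorem \ref{Fundamental_thm} is invoked, with the cone link playing the role of the "target" $X_0$.
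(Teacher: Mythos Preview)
Your proposal is correct and follows essentially the same route as the paper: define the rescaled links $X_t$, use the LLNE hypothesis to get a uniform constant $C$, show that the tangent-cone link $X_0$ is itself $C$-LNE (the paper does this by taking geodesics in $X_{t_n}$ between approximating points and extracting a limiting arc in $X_0$ via Arzel\`a--Ascoli; your one-line sketch has the direction of approximation slightly reversed, but the content is the same), and then apply Theorem~\ref{Fundamental_thm} once $dist_H(X_t,X_0)<\varepsilon_0(X_0)/(20C^2)$. The final identification $\pi_1(X_t)\cong\pi_1(L_0(X))$ for small $t$ via the local conic structure is exactly what the paper invokes by choosing $t$ below a Milnor radius.
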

\begin{proof}
Let $X_t:=(\frac{1}{t} X)\cap \mathbb S^{m-1}$ for $t>0 $.
Since $X$ is LLNE at $0$, there exist $C\geq 1$ and $\varepsilon>0$ such that $d_{X_t}(x,y)\leq C\|x-y\|$ for all $x,y\in X_t$ and for all $t\in (0,\varepsilon]$. By shrinking $\varepsilon$, if necessary, we assume that $\varepsilon$ is a Milnor radius for $X$ at $0$.
\begin{claim}\label{lne_set_lne_cone}
$X_0$ is $C$-LNE.
\end{claim}
\begin{proof}
We claim that for each pair of points $v,w\in X_0$ there exists an arc $\alpha\colon [0,1]\rightarrow X_0$ connecting $v$ to $w$ such that $length(\alpha)\leq C \|v-w\|$. In fact, let $v,w\in X_0$ and let $\{x_n\}_n$ and $\{y_n\}_n$ be two sequences of points in $X$ such that $\|x_n\|=\|y_n\|=t_n\to 0$ and $$\frac{1}{t_n}x_n\to v \quad \quad  \mbox{and} \quad \quad \frac{1}{t_n}y_n\to w.$$ For each $n$, let $\gamma_n\colon[0,1]\rightarrow X_{t_n}=X\cap \mathbb{S}_{t_n}^{m-1}$ be a geodesic arc on $X_{t_n}$ connecting $x_n$ to $y_n$. Let us define $\alpha_n\colon[0,1]\rightarrow \R^m$ by $t_n\alpha_n=\gamma_n$. We have seen that $\alpha_n$ is contained in the compact set $\mathbb{S}^{m-1}$. Moreover,
\begin{eqnarray*}
length(\alpha_n)&=&\frac{1}{t_n}length(\gamma_n) \\
&=& \frac{1}{t_n}d_{X_{t_n}}(x_n,y_n) \\
&\leq &\frac{C}{t_n} \|x_n-y_n\|.
\end{eqnarray*}
Since $\displaystyle\frac{1}{t_n} \| x_n-y_n\| \to \| v-w\|$, we get that $length(\alpha_n)$ is uniformly bounded and, therefore, by using Arzel\'a-Ascoli Theorem, there exists a continuous arc $\alpha\colon [0,1]\rightarrow X_0$ for which $\alpha_n$ converges uniformly, up to subsequence; let us say $\alpha_{n_k}\rightrightarrows\alpha$. Now, let us estimate length of $\alpha$. Given $\epsilon>0$, there is a positive integer number $k_0$, such that, for any $k>k_0$, $$ length(\alpha_{n_k})\leq C\|v-w\|+\epsilon, $$ hence $length(\alpha)$ is at most $C\| v-w\|+\epsilon.$ Since $\epsilon>0$ was arbitrarily chosen, we conclude that $length(\alpha)$ is at most $C \| v-w\|$. Therefore  $d_{X_0}(v,w)\leq  C\|v-w\|$ and this finishes the proof of Claim \ref{lne_set_lne_cone}.
\end{proof}

Thus, since $X$ is LLNE at $0$, it follows from Claim \ref{lne_set_lne_cone} that there exists a constant $C\geq 1$ such that $X_t$ is $C$-LNE for all $t\in [0,\varepsilon]$.
% $$
% d_{X_t}(x,y)\leq \tilde C\|x-y\|, \quad \mbox{for all } x,y\in X_t.
% $$
Moreover, $X_t\to X_0$ in the Hausdorff distance, then there exits $0<t_0\leq \varepsilon$ such that $dist_H(X_t,X_0)<\frac{\varepsilon_0(X_0)}{20 C^2}$ for all $t\leq t_0$.
Therefore, by Theorem \ref{Fundamental_thm}, there exists an epimorphism $h\colon\pi_1(X_{t_0})\to \pi_1(X_0)$ and since $t_0$ is a Milnor radius for $X$ at $0$, the proof is finished.
\end{proof}

\begin{remark}
Let us mention that for a complex analytic set $X$, $\pi_1(L_0(X))$ can be any finitely presented group. In fact, for any finitely presented group $G$, there exists  even $3$-dimensional complex analytic set $X$ with isolated singularity such that $\pi_1(L_0(X))\cong G$ (see Theorem 1 in \cite{KapovichK:2014}).
\end{remark}

% \subsection{On links at infinity}
In a similar way, we obtain also the following result.
\begin{corollary}\label{Fundamental_thm_infinity}
Let $X\subset \mathbb R^m$ be a closed semialgebraic set. Assume that $X$ is LLNE at infinity. Then there exists an epimorphism $h\colon \pi_1(L_{\infty}(X))\to \pi_1(X_{\infty})$, where $X_{\infty}=C_{\infty}(X)\cap\mathbb{S}^{2m-1}$.
\end{corollary}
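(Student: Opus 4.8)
The plan is to run, essentially verbatim, the argument used to prove Corollary~\ref{Fundamental_thm_local}, with the origin replaced by the point at infinity, the rescalings $t\to 0$ replaced by $t\to+\infty$, and the Milnor radius at $0$ replaced by a ``Milnor radius at infinity''. Write $X_t:=(\frac1t X)\cap\mathbb S^{m-1}$ for $t>0$. Since $X$ is LLNE at infinity, Definition~\ref{def:link_lne} provides constants $C\geq 1$ and $R_0>0$ such that $d_{X_t}(x,y)\leq C\|x-y\|$ for all $x,y\in X_t$ and all $t\geq R_0$; in particular each $X_t$ with $t\geq R_0$ is $C$-LNE.

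The first step is to show that $X_\infty=C_\infty(X)\cap\mathbb S^{m-1}$ is itself $C$-LNE, which is the exact analogue of Claim~\ref{lne_set_lne_cone}. Given $v,w\in X_\infty$, choose sequences $x_n,y_n\in X$ with $\|x_n\|=\|y_n\|=t_n\to+\infty$ and $\frac1{t_n}x_n\to v$, $\frac1{t_n}y_n\to w$; take geodesic arcs $\gamma_n$ on $X_{t_n}$ joining $x_n$ to $y_n$ and set $\alpha_n:=\frac1{t_n}\gamma_n$, a curve in the compact sphere $\mathbb S^{m-1}$. Then $length(\alpha_n)=\frac1{t_n}d_{X_{t_n}}(x_n,y_n)\leq\frac{C}{t_n}\|x_n-y_n\|\to C\|v-w\|$, so by Arzel\'a--Ascoli a subsequence of $\alpha_n$ converges uniformly to an arc $\alpha$ in $X_\infty$ with $length(\alpha)\leq C\|v-w\|$, whence $d_{X_\infty}(v,w)\leq C\|v-w\|$. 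Consequently $X_t$ is $C$-LNE for every $t\in\{\infty\}\cup[R_0,+\infty)$.

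The second step uses the conic structure of semialgebraic sets near infinity. There is $R_1\geq R_0$ such that $X\setminus\overline{B_{R_1}(0)}$ is semialgebraically homeomorphic to $L_\infty(X)\times(R_1,+\infty)$; in particular $X_t$ is homeomorphic to $L_\infty(X)$, and hence $\pi_1(X_t)\cong\pi_1(L_\infty(X))$, for every $t\geq R_1$. Moreover, since $C_\infty(X)$ is a genuine cone and $\frac1t X\to C_\infty(X)$ locally in the Hausdorff metric as $t\to+\infty$, intersecting with the compact sphere gives $X_t\to X_\infty$ in the Hausdorff distance; so one may pick $t_0\geq R_1$ with $dist_H(X_{t_0},X_\infty)<\varepsilon_0(X_\infty)/(20C^2)$. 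Theorem~\ref{Fundamental_thm} then supplies an epimorphism from $\pi_1(X_{t_0})$ onto $\pi_1(X_\infty)$, and precomposing with the isomorphism $\pi_1(L_\infty(X))\cong\pi_1(X_{t_0})$ yields the desired epimorphism $h\colon\pi_1(L_\infty(X))\to\pi_1(X_\infty)$.

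The step that deserves the most care is the analogue of ``$\varepsilon$ is a Milnor radius'' in the local proof: one needs the tangent cone at infinity of a semialgebraic set to exist, to be unique, and to be a genuine cone, together with the local triviality of $X$ outside a sufficiently large ball. These are standard facts of semialgebraic geometry (they follow from semialgebraic triviality of $X$ off a large ball), so I do not expect a real obstacle there; the rest is a direct transcription of Corollary~\ref{Fundamental_thm_local}. As in that corollary, $\pi_1$ is understood with respect to the base points furnished by Theorem~\ref{Fundamental_thm} (one works on the components containing them, or assumes $L_\infty(X)$ connected).
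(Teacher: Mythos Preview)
Your proposal is correct and follows exactly the approach the paper indicates: the paper gives no separate proof of Corollary~\ref{Fundamental_thm_infinity}, merely writing ``In a similar way, we obtain also the following result,'' and you have spelled out that analogy faithfully (rescalings $t\to+\infty$ in place of $t\to0$, the analogue of Claim~\ref{lne_set_lne_cone} via Arzel\'a--Ascoli, semialgebraic local triviality outside a large ball in place of the Milnor radius, and an application of Theorem~\ref{Fundamental_thm}). The only points worth flagging are the ones you already isolate---existence and uniqueness of $C_\infty(X)$ and the conic structure of $X$ near infinity---and these are indeed standard semialgebraic facts.
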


An easy application of Corollary \ref{Fundamental_thm_infinity} is the following.
\begin{example}
The set $F=\{(x,y,z)\in \mathbb{C}^3;\, x^2+y^2+z^2=1\}$ is not LLNE at infinity, since by Kato-Matsumoto's Theorem \cite{KatoM:1975} $\pi_1(L_{\infty}(F))=0$ and by Mumford's Theorem $\pi_1(L_0(X))\not=0$, where $X=C_{\infty}(F)=\{(x,y,z)\in \mathbb{C}^3;\, x^2+y^2+z^2=0\}$.
\end{example}

\section{On characterization of smoothness}\label{sec:main_results}

Let us start this section stating our main result on characterization of smoothness of complex analytic sets, Theorem \ref{main_result}. We also list a series of consequences of Theorem \ref{main_result}.

\begin{theorem}\label{main_result}
Let $X\subset \mathbb C^m$ be an LNE complex analytic set at 0 and $k=\dim _{\mathbb{C}}X>1$. Assume that $\pi_1(L_0(X))\cong 0$. In this case, the following statements are mutually equivalent:
\begin{itemize}
\item [(1)] $X$ is an LLC $2k$-homology manifold at $0$;
\item [(2)] $X$ has no choking cycles at $0$ and $\pi_j(L_0(X))\cong 0$ for all $2\leq j\leq 2k-2$;
% \item [(3)] $X_t\to X_0$ $(2k-2)$-regular and $\pi_j(L_0(X))\cong 0$ for all $0\leq j\leq 2k-2$:
\item [(3)] $H_j(X_0)\cong 0$ for all $2\leq j\leq 2k-2$, where $X_0=C(X,0)\cap\mathbb{S}^{2m-1}$;
% \item [(5)] $X$ is Lipschitz regular at $0$;
\item [(4)] $X$ is smooth at $0$.
\end{itemize}
In particular, if any of the above items holds true, we get that $X_t:=(\frac{1}{t} X)\cap \mathbb S^{2m-1}$ is diffeomorphic to $\mathbb S^{2k-1}$ for all small enough $t>0$.
\end{theorem}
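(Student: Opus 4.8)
\emph{Strategy.} I would prove the equivalences by the two chains $(4)\Rightarrow(1)\Rightarrow(3)\Rightarrow(4)$ and $(4)\Rightarrow(2)\Rightarrow(3)$; since then each of $(1),(2),(3)$ reaches $(4)$ and $(4)$ reaches each of them, all four statements become equivalent. The implications out of $(4)$ are routine. If $X$ is smooth at $0$ then, near $0$, $X$ is a real-analytic $2k$-dimensional submanifold of $\R^{2m}$; a manifold is a homology $2k$-manifold and is LLC (in a chart it is Lipschitz-flat, so small metric balls contract inside comparable ones), giving $(1)$. By the implicit function theorem $X$ is locally biholomorphic to $(\CC^k,0)$, so $L_0(X)\cong\Sp^{2k-1}$ and $\pi_j(L_0(X))\cong 0$ for $j\le 2k-2$; moreover $X$ is metrically conical with model cone $\CC^k=Cone(\Sp^{2k-1})$, and a metric cone over a sphere has no choking cycles (a small cycle in $\Sp^{2k-1}$ lies in a contractible ball and bounds there), a property preserved under bi-Lipschitz homeomorphism, so $X$ has no choking cycles at $0$; this gives $(2)$. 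Finally, the same local picture shows that for all small $t>0$ the link $X_t=(\tfrac1tX)\cap\Sp^{2m-1}$ is diffeomorphic to the unit sphere of $T_0X\cong\R^{2k}$, i.e. to $\Sp^{2k-1}$, which is the ``In particular'' clause.

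\emph{Proof of $(1)\Rightarrow(3)$.} A $k$-dimensional complex analytic set is metrically $2k$-dimensional: $2k$-rectifiability is clear from the analytic stratification, and local Ahlfors $2k$-regularity follows from the monotonicity of the mass ratio together with the finiteness (and lower bound $1$) of the Lelong number. Hence under $(1)$ the hypotheses of Lemma \ref{heinonen} hold at $0$, so $C(X,0)$ is a homology $2k$-manifold. Now $C(X,0)$ is a complex cone, hence the real cone over $X_0=C(X,0)\cap\Sp^{2m-1}$ with vertex $0$, and the radial deformation retraction gives $C(X,0)\setminus\{0\}\simeq X_0$. The long exact sequence of the pair $(C(X,0),C(X,0)\setminus\{0\})$, using that the cone is contractible, yields $H_n(C(X,0),C(X,0)\setminus\{0\})\cong\widetilde H_{n-1}(X_0)$ for all $n$. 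The homology-manifold condition at the vertex forces the left side to be $\Z$ for $n=2k$ and $0$ otherwise, so $\widetilde H_j(X_0)\cong 0$ for all $j\le 2k-2$ (in particular $X_0$ is connected). This is $(3)$.

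\emph{Proof of $(3)\Rightarrow(4)$.} Since $X$ is LNE at $0$ and $L_0(X)$ is connected, $X$ is LLNE at $0$ (by \cite{MendesS:2021}), so Corollary \ref{Fundamental_thm_local} provides an epimorphism $\pi_1(L_0(X))\twoheadrightarrow\pi_1(X_0)$; as $\pi_1(L_0(X))\cong 0$ we get $\pi_1(X_0)\cong 0$. Together with $(3)$, the Hurewicz theorem applied inductively gives $\pi_j(X_0)\cong H_j(X_0)\cong 0$ for all $j\le 2k-2$, and since $C(X,0)\setminus\{0\}\simeq X_0$ this means $\pi_j(C(X,0)\setminus\{0\})\cong 0$ for $0\le j\le 2k-2$. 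By Prill's Lemma \ref{prill}, $C(X,0)$ is a $k$-dimensional linear subspace $L\subset\CC^m$. To finish, consider the orthogonal projection $\pi_L\colon X\to L$: its kernel $L^\perp$ meets $C(X,0)=L$ only at $0$, so $\pi_L$ is finite near $0$, of degree $\mathrm{mult}_0(X)$. If this degree were $\ge 2$, then for a generic small $q\in L$ the points of $\pi_L^{-1}(q)\cap X$ differ only in their $L^\perp$-coordinates, hence by $C(X,0)=L$ lie at distance $o(|q|)$ from each other; by the LNE inequality they are joined in $X$ by a path of length $o(|q|)$, which projects to a loop in $L$ based at $q$, of length $o(|q|)$, that must encircle the branch locus $B\subset L$ of $\pi_L$. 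This would force $\mathrm{dist}(q,B)=o(|q|)$ for all small generic $q$, which is impossible because $B$ is a complex analytic subset of $L$ of dimension $<k$ (pick $q$ along a ray in $L$ whose direction is not in $C(B,0)$). Hence $\mathrm{mult}_0(X)=1$ and $X$ is smooth at $0$.

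\emph{Proof of $(2)\Rightarrow(3)$ — the main obstacle.} Combining the standing hypothesis $\pi_1(L_0(X))\cong 0$ with $(2)$ and the Hurewicz theorem, $H_j(L_0(X))\cong 0$ for $1\le j\le 2k-2$, hence $H_j(X_t)\cong 0$ for $1\le j\le 2k-2$ and all small $t>0$ since $X_t\cong L_0(X)$. The plan is to transfer this vanishing to the Hausdorff limit $X_0=\lim_{t\to 0}X_t$. As in the proof of Corollary \ref{Fundamental_thm_local} (Claim \ref{lne_set_lne_cone} and the line after it), the sets $X_t$ and $X_0$ are all $C$-LNE for a single constant $C$. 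Given a $j$-cycle $\xi$ in $X_0$ with $2\le j\le 2k-2$, one approximates it by a $j$-cycle $\xi_t$ in $X_t$ using the uniform LNE bound to fill the small gaps; since $H_j(X_t)\cong 0$, $\xi_t$ bounds a $(j+1)$-chain $\eta_t$ in $X_t$, and the hypothesis that $X$ has no choking cycles at $0$ is precisely what guarantees that the $\eta_t$ can be taken with controlled size, so that they push forward to a $(j+1)$-chain in $X_0$ with boundary $\xi$; thus $H_j(X_0)\cong 0$ for $2\le j\le 2k-2$, which is $(3)$. This is the higher-dimensional homological analogue of the $\pi_1$-transfer performed in the proof of Theorem \ref{Fundamental_thm}, with ``$X$ has no choking cycles'' playing the role that ``loops of length $<\varepsilon_0$ are nullhomotopic'' played there. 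Making this precise — choosing the simplicial subdivisions fine enough uniformly in $t$, keeping the approximating and filling chains inside $X_t$, and controlling the pushed-forward chains in the limit — is where the real work lies, and I expect it to be the main difficulty in the whole theorem.
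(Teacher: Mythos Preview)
Your overall architecture --- the chain $(4)\Rightarrow(1)\Rightarrow(3)\Rightarrow(4)$ together with $(4)\Rightarrow(2)\Rightarrow(3)$ --- is exactly the paper's, and your arguments for $(4)\Rightarrow(1),(2)$, $(1)\Rightarrow(3)$ and $(3)\Rightarrow(4)$ coincide with the paper's up to cosmetic differences: the paper also shows $X$ is metrically $2k$-dimensional via monotonicity/Lelong density and then applies Lemma~\ref{heinonen}, and for $(3)\Rightarrow(4)$ it also passes through \cite{MendesS:2021}, Corollary~\ref{Fundamental_thm_local}, Hurewicz, Prill, and a projection-to-the-tangent-plane argument (phrased there via connected components of the preimage of a small real cone in $C(X,0)\setminus\Sigma$, rather than via a loop encircling the branch locus; your version is morally the same but you should be careful that the short path in $X$ may cross $P^{-1}(\Sigma)$, so ``the projected loop encircles $B$'' needs the paper's cone-avoidance trick to be clean).

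The genuine gap is in $(2)\Rightarrow(3)$. You write that ``no choking cycles is precisely what guarantees that the $\eta_t$ can be taken with controlled size'', where $\eta_t$ bounds the cycle $\xi_t$ approximating a \emph{fixed} cycle $\xi$ in $X_0$. That is not what the hypothesis says: Definition~\ref{def:choking_cycles} is a statement about families $\xi_t$ with $\mathrm{diam}\,|\xi_t|\to 0$, and your $\xi_t$ has diameter roughly $\mathrm{diam}\,\xi$, not tending to $0$, so the hypothesis gives you no control over the size of $\eta_t$. The paper uses the ``no choking'' assumption differently: by contraposition it extracts a \emph{uniform local bounding property} --- for every $\epsilon>0$ there exist $\Delta,\delta>0$ such that for all $t<\delta$, every $r$-cycle in $X_t$ of diameter $<\Delta$ bounds a chain in $X_t$ of diameter $<\epsilon$ --- and then, rather than pushing chains back and forth by hand, invokes a classical theorem of Whyburn \cite{Whyburn:1935} on homology under Hausdorff limits (together with $H_j(X_t)=0$, which you correctly derive from $(2)$ via Hurewicz) to conclude $H_j(X_0)=0$. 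So the role of ``no choking'' is to provide uniform \emph{local} acyclicity of the family $\{X_t\}$, and the global transfer to the limit is packaged in Whyburn's result; your direct approximation scheme would need to be reorganized around this local property to go through.
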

\begin{proof}

First of all, it is clear that $(4)\Rightarrow (1)-(3)$. Since $X$ LNE at 0 and $\pi_1(L_0(X))\cong 0$, by Theorem 4.1 in \cite{MendesS:2021}, $X$ is LLNE at $0$.

Let $X_0=C(X,0)\cap\mathbb{S}^{2m-1}$ and $X_t:=(\frac{1}{t} X)\cap \mathbb S^{2m-1}_t$ for $t>0 $. We know that $X_t \to X_0$ in the Hausdorff distance. So, we are going to prove that $(1)\Rightarrow (3)$. Suppose that $X$ is an LLC $2k$-homology manifold.
% We want to use Lemma \ref{heinonen} to obtain that $C(X,p)$ is a homology $2k$-manifold.

To prove (3), we first prove that for each $p\in X$, there exists $\delta>0$ such that $X\cap B_{\delta}(p)$ is locally Ahlfors $2k$-regular.
% \begin{lemma}\label{loc_Ahlfors}
% Let $X\subset \mathbb{C}^m$ be a complex analytic set with dimension $k$.
% In this case, for each $p\in X$, there exists $\delta>0$ such that $X\cap B_{\delta}(p)$ is:% locally Ahlfors $2k$-regular, locally linearly contractible and a homology $2k$-manifold.
% \begin{enumerate}
% \item locally Ahlfors $2k$-regular;
% \item $2k$-rectifiable and, in particular, metrically $2k$-dimensional.
% \end{enumerate}
% \end{lemma}
% \begin{proof} % It is proved in Lemma 24.3 in (\cite{Simon:1983}, p. 126) there is $\rho>0$ such that $X\cap B_{\rho}(p)$ is locally Ahlfors $2k$-regular.
In order to do this, let us remind that for each $x\in X$, the density of $X$ at $x$ is equal to the multiplicity of $X$ at $x$ (see \cite{Draper:1969});
$$
m(X,x)=\lim\limits_{t\to 0^+}\frac{\mathcal{H}_{2k}(X\cap B_t(x))}{\mu_{2k}t^{2k}},
$$
where $\mu_{2k}$ is the volume of the $2k$-dimensional unit ball.
Thus, we can choose $\delta>0$ such that $C:=\mathcal{H}_{2k}(X\cap B_{\delta}(p))<+\infty$. Moreover, since the density of $X$ is an upper-semi-continuous function (see \cite[Corollary 17.8]{Simon:1983}), we can assume that $m(X,p)\geq m(X,x)$ for all $x\in X\cap B_{\delta}(p)$.

Thus, for a compact subset $K\subset X\cap B_{\delta}(p)$, we define
$$r_K:=dist(K, \mathbb{C}^m\setminus X\cap B_{\delta}(p)).$$
Thus, if $r<r_K$ and $x\in K$, we have that $B_{r}(x)\subset B_{\delta}(p)$ and by Monotonicity Formula (see \cite[Proposition 1, p. 189]{Chirka:1989} or in \cite[identity 17.5, p.84]{Simon:1983}), we obtain
$$
1\leq \frac{\mathcal{H}_{2k}(X\cap B_r(x))}{\mu_{2k}r^{2k}}\leq \frac{\mathcal{H}_{2k}(X\cap B_{r_K}(x))}{\mu_{2k}r_K^{2k}}\leq \frac{C}{\mu_{2k}r_K^{2k}}.
$$
Therefore, by defining $C_K:=\frac{C}{\mu_{2k}r_K^{2k}}$, we obtain that $X\cap B_{\delta}(p)$ is locally Ahlfors $2k$-regular.

Since $X$ has a Lipschitz stratification (see \cite{Mostowski:1985}), we obtain also that $X$ is $2k$-rectifiable and, in particular, $X$ is metrically $2k$-dimensional.
% \end{proof}

Therefore, by Lemma \ref{heinonen}, $C(X,p)$ is a homology $2k$-manifold.
By using homological exact sequences, it is easy to see that $H_j(C(X,0)\setminus\{0\})=H_j(X_0)=0$,  for $1\leq j\leq 2k-2$.

Now, we are going to prove that $(2)\Rightarrow (3)$.
Let us suppose that $X$ has no choking cycles at $0$. Under such assumptions, we claim that the Hausdorff convergence $X_t\to X_0$ has the following property: fixed the integer $1\leq r\leq 2k-2$, for each $\epsilon>0$, there exist $\Delta >0$ and $\delta>0$ such that, for all $t<\delta$, any $r$-dimensional cycle $\xi_t$ in $X_t$ of diameter smaller than $\Delta$ bounds a chain $\eta_t$ in $X_t$ of diameter smaller than $\epsilon$. Indeed,  if not, then there exists $\epsilon>0$; for all $\Delta>0$ and $t>0$ there exists a $r$-dimensional cycle $\xi_t$ in $X_t$ of diameter smaller than $\Delta$ which bounds a chain $\eta_t$ in $X_t$ of diameter larger than or equal to $\epsilon$. It gives us a family of $r$-dimensional choking cycles of $X$ at $0$.

Once proved the claim above, we rely on Theorem 1.3 in \cite{Whyburn:1935} to get $H_j(X_0)\cong 0$ for all $2\leq j\leq 2k-2$.

% In order to finish the proof we just need to prove the following lemma.

In the next, we are going to prove that $(3)\Rightarrow (4)$. Thus, we assume that $H_j(X_0)\cong 0$ for all $2\leq j\leq 2k-2$. By Corollary \ref{Fundamental_thm_local}, $\pi_1(X_0)=0$ and by Hurewicz's Theorem (see \cite{Hurewicz:1935}), we have $\pi_j(X_0)=\pi_j(C(X,0)\setminus \{0\})=0$ for $j=1,...,2k-2$.
By the Theorem of Prill (Theorem \ref{prill}), we have that the reduced cone $C(X,0)$ is a linear subspace of $\mathbb{C}^m$.
Thus, we can consider the orthogonal projection
$$P\colon \mathbb{C}^n\rightarrow C(X,0).$$
% We may suppose that $x_0=0$ and $P(x_0)=0$.

Notice that the germ of the restriction of the orthogonal projection $P$ to $X$ has the following properties:
\begin{enumerate}
\item [(i)] The germ at $0$ of $P_{| X}\colon X\rightarrow C(X,0)$ is a ramified cover and the ramification locus is the germ of a codimension $1$ complex analytic subset $\Sigma$ of the plane $C(X,0)$;
\item [(ii)] The multiplicity of $X$ at $0$ can be interpreted as the degree $d$ of this germ of ramified covering map, i.e. there are open neighborhoods $U_1$ of $0$ in $X$ and $U_2$ of $0$ in $C(X,0)$, such that $d$ is the degree of the topological covering:
$$P_{| X}\colon X\cap U_1\setminus P_{| X}^{-1}(\Sigma)\rightarrow C(X,0)\cap U_2\setminus \Sigma;$$
\item [(iii)] There is $C>0$ such that $\|z\|\leq C\|P(z)\|$, for all $z$ enough small in $X$;
\item [(iv)] If $\gamma :[0,\varepsilon)\rightarrow X$ is a real analytic arc, such that $\gamma(0)=0$, then the  arcs $\gamma$ and $P\circ\gamma$ are tangent at $0$.

\end{enumerate}

Let us suppose that the degree $d$ is bigger than 1. Since $\Sigma$ is a codimension $1$ complex analytic subset of the space $C(X,0)$, there exists a unit tangent vector $v_0\in C(X,0)\setminus C(\Sigma,0)$ with $\|v_0\|=1$,
where $C(\Sigma,0) $ is the tangent cone of $\Sigma$ at $0$. Since $v_0$ is not tangent to $\Sigma$ at $0$, there exist positive real numbers $k$ and $\varepsilon $ such that the real cone
$$C_{k,\varepsilon }:=\{v\in C(X,0)\setminus \{0\};\ \|v\|\leq \varepsilon \mbox{ and }\|v-tv_0\|< kt \ \forall \  0<t<1\}$$
is a subset of $C(X,0)\setminus \Sigma.$
Let $V=P^{-1}(C_{k,\varepsilon })$ and since we have assumed that the degree $d\geq 2$, we have two (different) liftings $\gamma_1(t)$ and $\gamma_2(t)$ of the half-line $r(t)=tv_0$,  i.e. $P(\gamma_1(t))=P(\gamma_2(t))=tv_0$.   Since $P$ is the orthogonal projection on the reduced tangent cone $C(X,0) $, the vector $v_0$ is the unit tangent vector to the arcs $\gamma_1$ and $\gamma_2$ at $0$. Let $\alpha_i$ be a re-parametrization of $\gamma_i$ such that $\|\alpha_i(t)\|=t$, for all $t$ enough small, for $i=1,2$.

On the other hand, $\beta_t:[0,1]\to X$ being a rectifiable path connecting $\alpha_1(t)$ to $\alpha_2(t)$, then  $length (P\circ \beta_t)\geq \frac{k}{C}t$, since $\beta_t(0)$ and $\beta_t(1)$ belong to different connected components of $V$. It implies that $d_{X}(\alpha_1(t),\alpha_2(t))\geq \frac{k}{C}t$. But, since $\alpha_1(t)$ and $\alpha_2(t)$ are tangent at $0$, that is $\displaystyle\frac{\|\alpha_1(t)-\alpha_2(t)\|}{t}\to 0 \ \mbox{as} \ t\to 0^+,$ and $\frac{k}{C}>0$, we get a contradiction to the fact that $X$ is LLNE at $0$. Thus, $d=1$ and, therefore, $X$ is smooth at $0$.
% \end{proof}
\end{proof}

For sets with isolated singularity, we have the following:
\begin{theorem}\label{main_consequence}
	Let $X\subset\mathbb{C}^n$ be a complex analytic set of dimension $k$ with isolated singularity at $0$. Then, $X$ is smooth at $0$ if and only if $X$ is locally metrically conical at $0$ and its link at $0$ is $(2k-2)$-connected.
\end{theorem}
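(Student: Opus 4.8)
The plan is to obtain Theorem~\ref{main_consequence} as a consequence of Theorem~\ref{main_result}. The implication ``$\Rightarrow$'' is immediate: if $X$ is smooth at $0$ then a small neighbourhood of $0$ in $X$ is, via a norm-preserving bi-Lipschitz homeomorphism, a linear subspace, so $X$ is locally metrically conical at $0$, and $L_0(X)$ is diffeomorphic to $\mathbb{S}^{2k-1}$, hence $(2k-2)$-connected. For the converse, the case $k=1$ is elementary: a locally metrically conical complex curve germ is LNE at $0$, an irreducible complex curve germ that is LNE at $0$ is smooth, and a $0$-connected link forces $X$ to be irreducible. So from now on I assume $k>1$, that $X$ is locally metrically conical at $0$, and that $L_0(X)$ is $(2k-2)$-connected.

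I would then verify the hypotheses of Theorem~\ref{main_result} together with its condition (3). First, $2k-2\geq 2$ forces $\pi_1(L_0(X))\cong 0$. Next, $X$ is LNE at $0$: cover the compact tangent sphere $C(X,0)\cap\mathbb{S}^{2n-1}$ by finitely many of the conical sectors $Cone_1(B_{\delta}(v))\cap X$ occurring in the definition of local metric conicality; each such sector is, by a norm-preserving bi-Lipschitz homeomorphism, a straight cone over a compact set, hence is LNE and moreover satisfies $d_{X}(p,0)\leq C\|p\|$ on it; a chaining argument over the finite cover, distinguishing the cases where the two points have nearly equal or far apart directions, then shows that a neighbourhood of $0$ in $X$ is LNE. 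Finally, for condition (3) --- namely $H_j(X_0)\cong 0$ for $2\leq j\leq 2k-2$, where $X_0=C(X,0)\cap\mathbb{S}^{2n-1}$ --- observe that on each metrically conical sector the tangent cone at $0$ is bi-Lipschitz to the very same straight cone, so passing to links one gets, sector by sector, a homeomorphism between the corresponding piece of $X_0$ and the corresponding piece of $L_0(X)$; a Mayer--Vietoris argument over the finite cover upgrades this to $H_j(X_0)\cong H_j(L_0(X))$ for $j$ in the relevant range, and the latter vanishes for $2\leq j\leq 2k-2$ since $L_0(X)$ is $(2k-2)$-connected. Thus condition (3) of Theorem~\ref{main_result} holds, and that theorem yields that $X$ is smooth at $0$. (Alternatively one may check condition (1): $L_0(X)$ is a closed orientable $(2k-1)$-manifold which, being $(2k-2)$-connected, has the integral homology of $\mathbb{S}^{2k-1}$ by Poincar\'e duality, so $X$ is a $2k$-homology manifold at $0$ by the topological conical structure of $X$ near $0$, while sectorwise metric conicality gives local linear contractibility.)

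The step I expect to be the genuine obstacle is the passage from the \emph{local}, sector-by-sector, metric conicality to the \emph{global} statements near $0$. Each single-sector ingredient --- its being LNE, the homeomorphism of its link slice onto the corresponding piece of $X_0$, its local linear contractibility --- is routine; but amalgamating finitely many such sectors requires real care with their overlaps and honest use of the conical scaling, precisely because naive gluing of metric properties fails (two tangent parabolas are each LNE but their union is not, and similarly for contractibility bounds). Everything else is either immediate or a direct appeal to Theorem~\ref{main_result} and the facts it relies on (Lemma~\ref{prill}, Corollary~\ref{Fundamental_thm_local}, and Hurewicz's theorem).
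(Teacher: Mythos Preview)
Your overall strategy---reduce to Theorem~\ref{main_result}---is the paper's as well, but the execution differs in two places, and one of them contains a real gap.

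For LNE at $0$, the paper avoids any chaining over sectors. It invokes the Arc Criterion of Birbrair--Mendes: if $X$ fails to be LNE at $0$, there exist two subanalytic arcs $\gamma_1,\gamma_2$ tangent to the \emph{same} direction $v\in C(X,0)\cap\mathbb{S}^{2n-1}$ with $d_X(\gamma_1(t),\gamma_2(t))/\|\gamma_1(t)-\gamma_2(t)\|\to\infty$. Because both arcs are tangent to $v$, for small $t$ they lie inside the single metrically conical sector about $v$; that sector is bi-Lipschitz to a straight cone over a compact piece of the smooth link, hence LNE, and the contradiction is immediate. No overlap analysis is needed.

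For the input to Theorem~\ref{main_result}, the paper verifies condition~(2), not~(3) or~(1): it observes that local metric conicality rules out choking cycles at~$0$, and the $(2k-2)$-connectedness of $L_0(X)$ supplies the rest of~(2). This bypasses any computation of $H_*(X_0)$.

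Your route through condition~(3) has a gap as stated. You produce, sector by sector, homeomorphisms between pieces of $X_0$ and the corresponding pieces of $L_0(X)$, and then write that ``a Mayer--Vietoris argument \ldots\ upgrades this to $H_j(X_0)\cong H_j(L_0(X))$''. But Mayer--Vietoris together with the five lemma needs a \emph{single} comparison map (or a zig-zag through a common space) restricting compatibly to the sector maps; piecewise homeomorphisms with no control on overlaps do not provide one and need not glue, so the homology isomorphism is not yet justified. One could try to repair this via a nerve/\v{C}ech argument after refining the cover so that all multi-intersections are again metrically conical, but that is considerably more than what you wrote. Your alternative via condition~(1) is on firmer ground---the homology-manifold part is correct, and LLC does follow from the sector structure together with the global cone contraction to~$0$---but even there one must split into the regimes $r\lesssim\|x\|$ and $r\gtrsim\|x\|$ to obtain uniform constants, which you do not indicate. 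In short: right destination, but the paper's path through the Arc Criterion and condition~(2) sidesteps exactly the gluing difficulties you correctly flagged as the hard part.
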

\begin{proof}
It is clear that if $X$ is smooth at $0$ then $X$ is locally metrically conical at $0$ and its link at $0$ is $(2k-2)$-connected.

Let us prove the converse. Assume that $X$ is locally metrically conical at $0$ and its link at $0$ is $(2k-2)$-connected. Then $X$ is LNE at $0$. Indeed, if $X$ is not LNE at $0$, by Arc Criterion Theorem (see \cite[Theorem 2.2]{BirbrairM:2018}), there are a $v\in C(X,0)\cap \mathbb{S}^{2n-1}$ and two subanalytic arcs $\gamma_1,\gamma_2\colon [0,\varepsilon)\to X$ such that $\gamma_i(t)=tv+o(t)$ and $\|\gamma_i(t)\|=t$ for all $t\in [0,\varepsilon)$ and $i=1,2$ such that 
$$
\lim \limits_{t\to 0^+}\frac{d_X(\gamma_1(t), \gamma_2(t))}{\|\gamma_1(t)- \gamma_2(t)\|}=+\infty.
$$

Since $X$ is locally metrically conical at $0$, there are $r,\delta>0$ and a bi-Lipschitz homeomorphism $\varphi\colon X\cap C_{r}(v)\cap \overline{B_{\delta}(0)}\to Cone_1(X\cap C_{r}(v)\cap \mathbb{S}^{2n-1}_{\delta})$ such that $\|\varphi(x)\|=\|x\|$ for all $x\in X\cap C_{r}(v)\cap \overline{B_{\delta}(0)}$, where $C_{r}(v)=\{tw; w\in \overline{B_r(v)}$ and $t\in [0,1]\}$. Then, by Theorem 3.2 in \cite{Sampaio:2016}, there is a bi-Lipschitz homeomorphism $\psi:=d_0\varphi\colon C(X,0)\cap C_{r}(v)\cap \overline{B_{\delta}(0)}\to Cone_1(X\cap C_{r}(v)\cap \mathbb{S}^{2n-1}_{\delta}) $. Since $X$ has an isolated singularity at $0$, by shrinking $\delta$, if necessary, we can assume that $X_t:=X\cap \mathbb{S}^{2n-1}_t$ is a compact submanifold for all $0<t\leq \delta$. Let $U\subset C_{r}(v)$ be an open neighborhood of $w=\psi (\delta v)$ such that $U\cap X\cap \mathbb{S}^{2n-1}_{\delta}$ is LNE. Since $\gamma_1(t)$ and $\gamma_2(t)$ are tangent to $v$ at $t=0$, $\gamma_i(t)\in A:=\varphi^{-1}(Cone_1(U\cap X_{\delta}))$ for all small enough $t>0$ and $i=1,2$. However, $A$ is LNE, and thus there exists a constant $C\geq 1$ such that $d_A(\gamma_1(t),\gamma_2(t))\leq C\|\gamma_1(t)-\gamma_2(t)\|$ for all small enough $t>0$. Since $d_X(\gamma_1(t),\gamma_2(t))\leq d_A(\gamma_1(t),\gamma_2(t))$, we obtain a contradiction. Therefore, $X$ is LNE at $0$.

% 
% Since $X$ is locally metrically conical at $0$ and $X_0=C(X,0)\cap \mathbb{S}^{2n-1}$ is a compact subset, there are $(v_1, r_1),..,(v_m,r_m)\in X_0\times (0,\infty)$, $\delta>0$ and $K>0$ such that $X_0\subset \bigcup\limits_{i=1}^m B_{r_i}(v_i)$ and for each $i$ there exists a bi-Lipschitz homeomorphism $\varphi_{i}\colon X\cap C_{r_i}(v_i)\cap \overline{B_{\delta}(0)}\to Cone_1(X\cap C_{r_i}(v_i)\cap \mathbb{S}^{2n-1}_{\delta})$ with bi-Lipschitz constant $K$ and such that $\|\varphi_{i}(x)\|=\|x\|$ for all $x\in X\cap C_{r_i}(v_i)\cap \overline{B_{\delta}(0)}$, where $C_{r}(v)=\{tw; w\in B_{r}(v)$ and $t\in [0,1]\}$. Since $X$ has an isolated singularity at $0$, by shrinking $\delta$, if necessary, we can assume that $X_t:=X\cap \mathbb{S}^{2n-1}_t$ is a compact submanifold for all $0<t\leq \delta$. In particular, for any $x\in X_{\delta}$ there exist some $i\in\{1,...,m\}$ and an open neighborhood of $x$, $U_x\subset C_{r_i}(v_i)$, such that $X_{\delta}\cap U$ is LNE. Since $X_{\delta}$ is a compact set, there exists $C\geq 1$ and $x_1,...,x_N\in X_{\delta}$ such that each $U_{x_j}\cap X_{\delta}$ is $C$-LNE and $X_{\delta}\subset \bigcup\limits_{j=1}^N U_{x_j}$. Thus, there exists $\tilde C=\tilde C(K,N,C)$ such that $X_t$ is $\tilde C$-LNE for all $0<t\leq \delta$. Then, by Theorem 4.1 in \cite{MendesS:2021}, $X$ is LNE at $0$.

Since $X$ is locally metrically conical $0$, it has no choking cycles at $0$. Thus, $X$ is LNE at $0$, has no choking cycles at $0$ and $\pi_j(L_0(X))\cong 0$ for all $1\leq j\leq 2k-2$. By item (2) of Theorem \ref{main_result}, $X$ is smooth at $0$.
\end{proof}

\begin{corollary}\label{mumford_gen_curvature}
Let $X\subset\mathbb{C}^m$ be a complex analytic set with an isolated singularity at $0\in X$ and $k=\dim X>0$. Assume that $\pi_i(L_0(X))=0$ for all $1\leq i\leq 2k-2$ and $X$ is LNE at $0$. If there exists $K_0\in [0,+\infty)$ such that the sectional curvature of $X_t:=(\frac{1}{t}X)\cap \mathbb{S}^{2m-1}$, denoted by $K_{X_t}$, satisfies $0\leq |K_{X_t}|\leq K_0$, for all small enough $t>0$, then $X$ is smooth at $0$. In particular, we obtain that the link of $X$ at $0$ is diffeomorphic to $\mathbb S^{2k-1}$.
\end{corollary}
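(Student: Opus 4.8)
The plan is to check hypothesis (2) of Theorem~\ref{main_result} and then quote the implication $(2)\Rightarrow(4)$ (which also gives the ``in particular'' assertion). We are already given that $X$ is LNE at $0$, that $k>1$, and that $\pi_j(L_0(X))\cong 0$ for $1\le j\le 2k-2$ -- so $\pi_1(L_0(X))\cong 0$ and $\pi_j(L_0(X))\cong 0$ for $2\le j\le 2k-2$ hold -- and the only remaining ingredient of (2) is that \emph{$X$ has no choking cycles at $0$}. This is precisely where I would use the uniform curvature bound. (We may assume $k\ge 2$, the case $k=1$ being straightforward.)

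First I would record three uniform bounds on the links $X_t:=(\frac{1}{t}X)\cap\mathbb S^{2m-1}$, which for all small $t>0$ are compact connected smooth $(2k-1)$-manifolds. Curvature: by hypothesis $|K_{X_t}|\le K_0$, uniformly in $t$. Diameter: since $X$ is LNE at $0$ and $\pi_1(L_0(X))\cong 0$, Theorem~4.1 of \cite{MendesS:2021} gives that $X$ is LLNE at $0$, so there is $C\ge 1$ with every $X_t$ being $C$-LNE for all small $t$; in particular the intrinsic diameter of $X_t$ is at most $2C$. Volume: since $X$ is a complex analytic germ of complex dimension $k$, the monotonicity formula together with $m(X,0)\ge 1$ shows that $\mathcal H_{2k-1}(X_t)$ is bounded above and bounded below away from $0$, uniformly for small $t>0$ (in fact $\mathcal H_{2k-1}(X_t)$ converges to the $(2k-1)$-volume of the link of the tangent cone $C(X,0)$).

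Next, from these bounds I would extract a uniform lower bound on the convexity radius of the $X_t$: by Cheeger's injectivity-radius estimate, a closed Riemannian $(2k-1)$-manifold with $|\sec|\le K_0$, volume $\ge v>0$ and diameter $\le D$ has injectivity radius $\ge i_0=i_0(k,K_0,v,D)>0$, so $\mathrm{inj}(X_t)\ge i_0$ for all small $t$, and hence the convexity radius of $X_t$ is $\ge$ some uniform $\rho_*>0$ (controlled from below by the injectivity radius and the upper curvature bound; if $K_0=0$ take $\rho_*=i_0/2$). Then, given any family $\{\xi_t\}$ of integral singular cycles with $\xi_t\subset X_t$ and $\mathrm{diam}\,|\xi_t|\to 0$, for small $t$ one has $\mathrm{diam}\,|\xi_t|<\rho_*$; fixing $q_t\in|\xi_t|$, the support $|\xi_t|$ lies in the convex metric ball $B_{\mathrm{diam}|\xi_t|}(q_t)\subset X_t$, and coning $\xi_t$ to $q_t$ along the minimizing geodesics of that ball produces an integral chain $\eta_t$ in $X_t$ with $\partial\eta_t=\xi_t$ and $|\eta_t|\subset B_{\mathrm{diam}|\xi_t|}(q_t)$, so $\mathrm{diam}\,|\eta_t|\le 2\,\mathrm{diam}\,|\xi_t|\to 0$. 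This contradicts condition~(c) of Definition~\ref{def:choking_cycles}, so $X$ has no choking cycles at $0$; hypothesis (2) of Theorem~\ref{main_result} is then satisfied, and $(2)\Rightarrow(4)$ yields that $X$ is smooth at $0$ and that $X_t$ is diffeomorphic to $\mathbb S^{2k-1}$ for all small $t>0$.

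The main obstacle will be the uniform lower bound on the convexity radius: the curvature bound alone is not enough (a long thin tube has bounded curvature yet tiny injectivity radius), so one genuinely needs the accompanying uniform volume and diameter bounds on the links. The volume bound is supplied by the complex-analyticity of $X$ (the monotonicity formula and $m(X,0)\ge 1$) and the diameter bound by the LNE hypothesis via the Mendes--Sampaio theorem; once these are in hand, Cheeger's estimate and the geodesic-coning step are routine, and everything else reduces to Theorem~\ref{main_result}.
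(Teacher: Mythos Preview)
Your proposal is correct and follows essentially the same route as the paper: obtain a uniform lower volume bound on the links $X_t$ from the density/monotonicity of the complex analytic germ, feed this together with the curvature bound (and the diameter bound, which the paper leaves implicit but you correctly supply via LLNE) into Cheeger's injectivity-radius estimate, and deduce that small cycles in $X_t$ bound small chains, whence $X$ has no choking cycles and Theorem~\ref{main_result}(2)$\Rightarrow$(4) applies. Your write-up is in fact more explicit than the paper's, which compresses the last step into the single sentence ``This together with the fact that $X$ is LNE imply that $X$ has no choking cycles''; the only cosmetic point to watch is that when you place $|\xi_t|$ inside a convex metric ball you should insert the LNE constant $C$ to pass from the Euclidean diameter of $|\xi_t|$ to its intrinsic diameter.
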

\begin{proof}
By coarea formula, there exists $t_0>0$ such that $\mathcal H_{2k-1}(X_t)\geq \frac{1}{2}$, for all $0<t\leq t_0$, since $\lim\limits_{t\to 0^+}\frac{\mathcal{H}_{2k}(X\cap B_t(0))}{\mu_{2k}t^{2k}}=m(X,0)\geq 1$, where $\mu_{2k}$ is the Hausdorff measure of the $2k$-dimensional unit ball.
Thus, by Theorem 2.1 in \cite{Cheeger:1970}, there exists a constant $r>0$ that does not depend on $t$ such that for each $t$ and each $p\in X_t$ the injectivity radius of the exponential map at $p$ is greater than $r$. This together with the fact that $X$ is LNE (at $0$) imply that $X$ has no choking cycles at $0$. Thus, by Theorem \ref{main_result}, $X$ is smooth at $0$.
\end{proof}

Since any complete intersection $X$ with $\dim X>2$ has simply connected link (see \cite{Hamm:1971}, cf. p. 2 in \cite{Massey}), we obtain the following version of the A'Campo-L\^e's Theorem.
\begin{corollary}\label{ACampo-Le_gen}
Let $X\subset\mathbb{C}^m$ be a complex analytic set which is a complete intersection with $k=\dim X>2$. Then $X$ is smooth at $0$ if and only if $X$ is LLC, $(2k)$-homology manifold and LNE at $0$.
\end{corollary}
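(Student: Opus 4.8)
The plan is to obtain this corollary as an immediate application of Theorem \ref{main_result}, the only extra input being the classical fact that a complete intersection of complex dimension $k>2$ has simply connected link at $0$.

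First I would dispose of the "only if" direction, which is routine. If $X$ is smooth at $0$, then a neighborhood of $0$ in $X$ is a smooth $2k$-dimensional submanifold of $\mathbb{C}^m$, hence locally bi-Lipschitz to an open ball of $\R^{2k}$. From such a chart one reads off all three properties: $X$ is a homology $2k$-manifold at $0$ (it is a topological manifold there), $X$ is LLC at $0$ (small metric balls contract linearly, as they do in $\R^{2k}$), and $X$ is LNE at $0$ (a bi-Lipschitz chart onto a convex set controls the inner distance). So the content is in the converse.

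For the "if" direction, assume $X$ is an LLC homology $2k$-manifold at $0$ and is LNE at $0$. Since $X$ is a complete intersection with $k=\dim_{\mathbb{C}} X>2$, its link $L_0(X)$ is simply connected (see \cite{Hamm:1971}, cf. p. 2 in \cite{Massey}), i.e. $\pi_1(L_0(X))\cong 0$. Together with $k>1$ and the LNE hypothesis, this puts us exactly in the hypotheses of Theorem \ref{main_result}, and the assumption that $X$ is an LLC homology $2k$-manifold at $0$ is precisely item $(1)$ of that theorem. Invoking the implication $(1)\Rightarrow(4)$ we conclude that $X$ is smooth at $0$; the "in particular" clause of Theorem \ref{main_result} moreover gives that $L_0(X)$ is diffeomorphic to $\mathbb{S}^{2k-1}$.

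I do not expect a genuine obstacle here: the proof is a formal appeal to Theorem \ref{main_result}. The one point worth flagging is the verification of its hypotheses, and in particular the simple connectedness of the link --- this is exactly what forces the dimension restriction $k>2$ (rather than merely $k>1$) in the statement, since for $k=2$ the link of a complete intersection need not be simply connected and the reduction to Theorem \ref{main_result} would fail.
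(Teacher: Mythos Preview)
Your proposal is correct and matches the paper's approach exactly: the corollary is obtained by observing (via \cite{Hamm:1971}, cf. \cite{Massey}) that a complete intersection of complex dimension $k>2$ has simply connected link, and then invoking the equivalence $(1)\Leftrightarrow(4)$ of Theorem \ref{main_result}. Your remark on why the hypothesis $k>2$ (rather than $k>1$) is needed is also apt.
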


\begin{remark}
In order to obtain smoothness at $0$ of a complex analytic curve $X\subset \mathbb{C}^n$, it is enough to ask that it is LNE at $0$ and $\pi_1(L_0(X))\cong\mathbb Z$.
\end{remark}

\begin{remark}\label{rem:trivial_cone}
If $X\subset \mathbb C^m$ is a $k$-dimensional complex analytic set with no choking cycles at $0$ and $H_j(L_0(X))= 0$ for all $1\leq j\leq 2k-2$ such that $\pi_1(L_0(C(X,0)))= 0$ then by looking at the proof of Theorem \ref{main_result}, we obtain that $C(X,0)$ is a linear subspace, without the hypothesis that $X$ is LNE at $0$;
\end{remark}

Let us remark that there exist examples of complex analytic sets which are topological manifolds but they are not LLC.
\begin{example}
For each integer $k>1$, the set $X=\{(x_1,...,x_{2k})\in \mathbb{C}^{2k};\, x_1^2+...+x_{2k-1}^2=x_{2k}^3\}$ is a topological manifold (see \cite{Brieskorn:1966}), but it is not smooth. Moreover, since $C(X,0)$ is not a linear subspace and $\pi_1(L_{C(X,0)})=0$, by the proof of Theorem \ref{main_result}, $X$ cannot be LLC.
\end{example}

As another consequence, we obtain the following.

% \begin{corollary}[Theorem 3.1 in \cite{BirbrairFLS:2016}]
% Let $X\subset\mathbb{C}^m$ be a $k$-dimensional complex analytic set. If $X$ is subanalytically Lipschitz regular, then $X$ is smooth.
% \end{corollary}
% In fact, by remark \ref{rmk:lne_link_lne}, we have also the following result.

\begin{corollary}[Theorem 4.2 in \cite{Sampaio:2016}]
Let $X\subset\mathbb{C}^m$ be a $k$-dimensional complex analytic set. If $X$ is Lipschitz regular, then $X$ is smooth.
\end{corollary}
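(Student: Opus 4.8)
The plan is to derive this statement as a special case of Theorem \ref{main_result}, by checking that Lipschitz regularity forces all the hypotheses of item (1) (equivalently item (4)) to hold, at least after restricting to the singular locus. Recall that $X$ being \emph{Lipschitz regular} at a point $p$ means there is a bi-Lipschitz homeomorphism from a neighborhood of $p$ in $X$ onto an open subset of $\mathbb{C}^k \cong \R^{2k}$. First I would observe that this is an open condition is irrelevant; instead, fix an arbitrary $p \in X$ and work locally at $p$. Bi-Lipschitz homeomorphisms preserve the LNE property (the inner distance is distorted by a bounded factor under a bi-Lipschitz map, and $\R^{2k}$ is trivially LNE), so $X$ is LNE at $p$. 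They also preserve local linear contractibility and the property of being a $2k$-homology manifold (both are invariant under bi-Lipschitz — indeed under any — homeomorphism), and $\R^{2k}$ enjoys both. Hence $X$ is an LLC $2k$-homology manifold at $p$; since $k = \dim_{\mathbb{C}} X$ here, we need $k > 1$, which I would treat separately below for the curve case $k=1$.

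The one remaining hypothesis of Theorem \ref{main_result} is that $\pi_1(L_p(X)) \cong 0$. This does \emph{not} follow from Lipschitz regularity directly at the level of the link, but it does follow once we know the tangent cone is a linear subspace: if $X$ is bi-Lipschitz homeomorphic at $p$ to $\R^{2k}$, then by Theorem 3.2 in \cite{Sampaio:2016} the tangent cone $C(X,p)$ is bi-Lipschitz homeomorphic to $\R^{2k}$, hence (being a complex cone, by the Remark after Definition \ref{def:tg_cone}, so a union of complex lines) it must be a $k$-dimensional linear subspace. Then $L_p(X)$ is simply connected: indeed $X$ LNE at $p$ implies $X$ is LLNE at $p$ by Theorem 1.1 in \cite{MendesS:2021} (the link is connected since the tangent cone is irreducible), and Corollary \ref{Fundamental_thm_local} gives an epimorphism $\pi_1(L_p(X)) \twoheadrightarrow \pi_1(C(X,p)\cap \mathbb{S}^{2m-1})$; but $L_p(X)$ is homeomorphic to $\mathbb{S}^{2k-1}$ via the bi-Lipschitz chart restricted to a small sphere, which has trivial $\pi_1$ once $k>1$. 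Alternatively, and more cleanly: $X \cap B_\varepsilon(p)$ is homeomorphic to an open ball in $\R^{2k}$, so its link is $\mathbb{S}^{2k-1}$, which is simply connected and has all its homology groups $H_j$, $2 \le j \le 2k-2$, vanishing — this is exactly item (3) (or rather, it gives item (1) immediately). So Theorem \ref{main_result} applies and $X$ is smooth at $p$; since $p$ was arbitrary, $X$ is smooth.

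For the curve case $k = 1$ (where Theorem \ref{main_result} does not apply), I would invoke the Remark just above stating that for a complex analytic curve it suffices that $X$ be LNE at $p$ with $\pi_1(L_p(X)) \cong \mathbb{Z}$; Lipschitz regularity gives LNE and gives $L_p(X) \cong \mathbb{S}^1$, so this applies and finishes that case. The main obstacle — really the only non-formal point — is verifying that Lipschitz regularity transfers to the tangent cone (which is where Theorem 3.2 in \cite{Sampaio:2016} does the heavy lifting), and then that a complex cone which is a topological manifold at $0$, or equivalently bi-Lipschitz to Euclidean space, must be linear; but once the tangent cone is known to be linear the rest is bookkeeping, as the whole point of Theorem \ref{main_result} is to package exactly this kind of argument. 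In fact the cleanest write-up may bypass Theorem \ref{main_result} entirely and argue as in its $(3)\Rightarrow(4)$ step directly, using that $C(X,p)$ is linear and that the ramified covering $P|_X$ has degree equal to the multiplicity, which LNE forces to be $1$.
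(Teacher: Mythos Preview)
Your proposal is correct and follows exactly the route the paper intends (the paper gives no explicit proof, simply presenting the corollary ``as another consequence'' of Theorem \ref{main_result}): verify that a bi-Lipschitz chart to $\R^{2k}$ forces $X$ to be LNE, LLC, a $2k$-homology manifold, and to have simply connected link at $p$, then apply item (1)$\Rightarrow$(4) of Theorem \ref{main_result} (with the curve case handled by the preceding Remark). One small correction: local linear contractibility is a \emph{metric} notion and is preserved under bi-Lipschitz maps but not under arbitrary homeomorphisms, so your parenthetical ``indeed under any --- homeomorphism'' is not right; since your chart is bi-Lipschitz this does not affect the argument, and your cleaner ``Alternatively'' paragraph for $\pi_1(L_p(X))$ is the one to keep.
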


Moreover, it follows from the proof of Theorem \ref{main_result} the following result.

\begin{corollary}\label{mumford_gen}
Let $X\subset\mathbb{C}^m$ be a complex analytic set with $k=\dim X>0$. Then $X$ is smooth at 0 if and only if $\pi_1(L_0(X))\cong \pi_1(\mathbb S^k)$, $X$ is LNE at $0$ and $H_i(L_{C(X,0)})=0$ for $2\leq i\leq 2k-2$.
\end{corollary}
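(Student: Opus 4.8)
The plan is to prove the two implications separately, and for the ``if'' direction to treat the cases $k=1$ and $k>1$ apart, each time reducing to a statement already established in the paper.

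\emph{Smoothness implies the three conditions.} If $X$ is smooth at $0$ then a local biholomorphism identifies the germ $(X,0)$ with $(\mathbb{C}^k,0)$, so $L_0(X)$ is homeomorphic to $\mathbb{S}^{2k-1}$; hence $\pi_1(L_0(X))\cong\pi_1(\mathbb{S}^{2k-1})\cong\pi_1(\mathbb{S}^k)$ (both groups are trivial for $k\geq 2$ and both are $\mathbb{Z}$ for $k=1$). A smooth submanifold is LNE at each of its points (this is one of the examples following Definition~\ref{def:link_lne}, and it is also elementary directly), so $X$ is LNE at $0$. Finally $C(X,0)=\mathbb{C}^k$ is a linear subspace, so $L_{C(X,0)}$ is homeomorphic to $\mathbb{S}^{2k-1}$ and $H_i(L_{C(X,0)})=0$ for all $1\leq i\leq 2k-2$, in particular for $2\leq i\leq 2k-2$.

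\emph{The three conditions imply smoothness.} Assume $\pi_1(L_0(X))\cong\pi_1(\mathbb{S}^k)$, that $X$ is LNE at $0$, and that $H_i(L_{C(X,0)})=0$ for $2\leq i\leq 2k-2$. If $k=1$ the homology condition is vacuous and $\pi_1(\mathbb{S}^1)\cong\mathbb{Z}$, so the hypotheses reduce exactly to ``$X$ is LNE at $0$ and $\pi_1(L_0(X))\cong\mathbb{Z}$'', and smoothness is the content of the Remark just preceding this corollary. If $k>1$, then $\pi_1(\mathbb{S}^k)\cong 0$, so $\pi_1(L_0(X))\cong 0$ and $X$ fulfils the standing hypotheses of Theorem~\ref{main_result}; moreover $L_{C(X,0)}$ is homeomorphic to $X_0=C(X,0)\cap\mathbb{S}^{2m-1}$, so the vanishing of $H_i(L_{C(X,0)})$ for $2\leq i\leq 2k-2$ is precisely condition $(3)$ of Theorem~\ref{main_result}. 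Hence $X$ is smooth at $0$ by the implication $(3)\Rightarrow(4)$ of that theorem.

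I do not expect a genuinely new obstacle here. For $k>1$ the whole argument is absorbed into the proof of Theorem~\ref{main_result}, where the real work is done: using Corollary~\ref{Fundamental_thm_local} to get $\pi_1(X_0)=0$, then Hurewicz together with the vanishing of $H_i(X_0)$ for $2\leq i\leq 2k-2$ to get $\pi_j(X_0)=0$ for all $1\leq j\leq 2k-2$, then Prill's Lemma~\ref{prill} to conclude that $C(X,0)$ is a linear subspace, and finally the orthogonal projection $P\colon\mathbb{C}^m\to C(X,0)$ together with the LLNE property of $X$ to force the degree of the ramified covering $P|_X$ to equal $1$. The only point that must be handled outside Theorem~\ref{main_result} is the curve case $k=1$, which the theorem explicitly excludes; there one invokes the preceding Remark, the essential observation being that $\pi_1(L_0(X))\cong\mathbb{Z}$ pins the link down as connected, i.e. $X$ as irreducible at $0$, which is exactly what makes ``LNE at $0$'' equivalent to ``smooth at $0$'' for curve germs.
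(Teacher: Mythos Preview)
Your proof is correct and follows essentially the same approach as the paper, which merely states that the corollary ``follows from the proof of Theorem~\ref{main_result}'' without spelling out the details. Your explicit case split (invoking the Remark for $k=1$ and applying $(3)\Rightarrow(4)$ of Theorem~\ref{main_result} directly for $k>1$) is exactly the intended reduction.
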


Let us analyze the hypotheses of Corollary \ref{mumford_gen}. First, let us observe that we cannot remove the condition that $X$ is LNE at $0$.
\begin{example}\label{non_LLNE}
The set $X=\{(x,y,z)\in \mathbb{C}^3;\, y^2-x^3=0\}$ satisfies $\pi_1(L_0(X))=0$ and $H_i(L_{C(X,0)})=0$ for $2\leq i\leq 2$, but $X$ is not smooth at $0$.
\end{example}
Now, let us show that we cannot also remove the condition $H_i(L_{C(X,0)})=0$ for $2\leq i\leq 2k-2$.
\begin{example}\label{non_hm}
The set $X=\{(x,y,z,w)\in \mathbb{C}^4;\, x^2+y^2+z^2=0\}=\{(x,y,z)\in \mathbb{C}^3;\, x^2+y^2+z^2=0\}\times \mathbb{C}$ is LNE (see Propositions 2.6 and 2.8 in \cite{KernerPR:2018}) and $\pi_1(L_0(X))=0$ (see Theorem 5.2 in \cite{Milnor:1968}), but $X$ is not smooth at $0$.
\end{example}

% Finally, we cannot remove the condition $\pi_1(L_0(X))\cong \pi_1(\mathbb S^k)$.
% \textcolor{red}{\begin{example}\label{non_simple_connected}
% The set $X=\{(x,y,z)\in \mathbb{C}^3;\, x^2+y^2+z^2=0\}$ is LLNE (see Proposition 2.8 in \cite{KernerPR:2018}) and $H_i(L_{C(X,0)})=H_i(L_{X})=0$ for $i=1, 2$, but $X$ is not smooth at $0$.
% \end{example}}

% \begin{example}
% The set $X=\{(x,y,z)\in \mathbb{C}^3;\, xy=0\}$ is locally linearly contractible and LLNE at $0$, but $X$ is not smooth.
% \end{example}

Let us remark also that Theorem \ref{main_result} and Corollaries \ref{mumford_gen} and \ref{ACampo-Le_gen} do not hold true in the real case.

\begin{example}[Proof of Proposition 1.9 in \cite{Sampaio:2019}]\label{non_complex}
The homogeneous algebraic set $X=\{(x_1,x_2,x_3,x_4)\in \R^4;\, x_1^{2021}+x_2^{2021}=x_3^{2021}\}$ is subanalytic bi-Lipschitz homeomorphic to $\R^3$ and, in particular, $X$ is LNE at 0, LLC, a $3$-homology manifold and $\pi_1(L_0(X))=0$, but it is not smooth.
\end{example}

\begin{corollary}\label{main_result_infinity}
Let $X\subset \mathbb C^m$ be a complex algebraic set which is LLNE at infinity and $k=\dim _{\mathbb{C}}X>1$. Assume that $\pi_1(L_{\infty}(X))=0$. In this case, the following statements are mutually equivalent:
\begin{itemize}
% \item [(1)] $X$ is an LLC homology $2k$-manifold at infinity and $H_j(L_{\infty}(X))\cong H_j(\mathbb{S}^{2k-1})$ for all $j$;
\item [(1)] $X$ has no choking cycles at infinity and $H_j(L_{\infty}(X))= 0$ for all $2\leq j\leq 2k-2$;
% \item [(3)] $X_t\to X_0$ $(2k-2)$-regular and $\pi_j(L_{\infty}(X))\cong \pi_j(\mathbb S^{2k-1})$ for all $0\leq j\leq 2k-2$:
\item [(2)] $H_j(X_{\infty})= 0$ for all $2\leq j\leq 2k-2$, where $X_{\infty}=C_{\infty}(X)\cap\mathbb{S}^{2m-1}$;
% \item [(5)] $X$ is Lipschitz regular at $0$;
\item [(3)] $X$ is an affine linear subspace.
\end{itemize}
In particular, if any of the above items holds true, we get that for each $p\in X$, $X_t:=[\frac{1}{t} (X-p)]\cap \mathbb S^{2m-1}$ is isometric to $\mathbb S^{2m-1}$ for all $t>0$.
\end{corollary}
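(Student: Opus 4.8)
The plan is to mimic the proof of Theorem \ref{main_result}, replacing the local analysis at $0$ by the corresponding analysis at infinity and using Corollary \ref{Fundamental_thm_infinity} in place of Corollary \ref{Fundamental_thm_local}. Throughout put $X_t:=(\frac1t X)\cap\mathbb S^{2m-1}$; then $X_t\to X_\infty$ in the Hausdorff distance, and since $X$ is LLNE at infinity there are $C\ge 1$ and $R>0$ such that every $X_t$ with $t\ge R$ is $C$-LNE. The implications $(3)\Rightarrow(1)$ and $(3)\Rightarrow(2)$ are clear: if $X$ is an affine linear subspace then, after translating, $X=C_\infty(X)$ is a $k$-dimensional linear subspace, so $X_\infty$ and $L_\infty(X)$ are round great spheres of dimension $2k-1$; in particular their homology vanishes in degrees $2\le j\le 2k-2$, and $X$ being a metric cone at infinity it has no choking cycles there.

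For $(1)\Rightarrow(2)$ I would run the argument of the step $(2)\Rightarrow(3)$ of Theorem \ref{main_result} verbatim at infinity: the absence of $j$-dimensional families of choking cycles of $X$ at infinity (for each $1\le j\le 2k-2$), together with the uniform LNE constant of the $X_t$, yields the uniform bounding property that for every $\epsilon>0$ there are $\Delta>0$ and $R'>R$ such that, for all $t>R'$, any $j$-cycle in $X_t$ of diameter less than $\Delta$ bounds a $(j+1)$-chain in $X_t$ of diameter less than $\epsilon$. Feeding this and the Hausdorff convergence $X_t\to X_\infty$ into Theorem 1.3 of \cite{Whyburn:1935} gives $H_j(X_\infty)=0$ for all $2\le j\le 2k-2$.

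For $(2)\Rightarrow(3)$, assume $H_j(X_\infty)=0$ for $2\le j\le 2k-2$. By Corollary \ref{Fundamental_thm_infinity} there is an epimorphism $\pi_1(L_\infty(X))\to\pi_1(X_\infty)$, and $\pi_1(L_\infty(X))=0$ forces $\pi_1(X_\infty)=0$; since $X_\infty$ is connected ($C_\infty(X)$ being a complex cone through $0$), Hurewicz's Theorem (see \cite{Hurewicz:1935}) gives $\pi_j(X_\infty)=\pi_j(C_\infty(X)\setminus\{0\})=0$ for $1\le j\le 2k-2$, so by Prill's Theorem (Lemma \ref{prill}) the cone $C_\infty(X)$ is a linear subspace $L\subset\mathbb C^m$. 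Let $P\colon\mathbb C^m\to L$ be the orthogonal projection. From $C_\infty(X)=L$ one obtains a constant $C'$ with $\|z\|\le C'\|P(z)\|$ for all $z\in X$ of large norm, hence $P|_X$ is proper at infinity; $X$ being algebraic of dimension $k=\dim L$, it restricts, over a neighborhood of infinity in $L$, to a ramified covering of some degree $d$ whose ramification locus is a codimension-one algebraic subset $\Sigma\subset L$. Now apply the projection argument of Theorem \ref{main_result} at infinity: if $d\ge 2$, a unit vector $v_0\in L\setminus C_\infty(\Sigma)$ has two distinct liftings of the ray $t\mapsto tv_0$ (for large $t$) to arcs $\gamma_1,\gamma_2\subset X$ that are asymptotically tangent at infinity, $\|\gamma_1(t)-\gamma_2(t)\|/t\to 0$, while $d_X(\gamma_1(t),\gamma_2(t))\ge ct$ for some $c>0$ because their $P$-images lie in different components of $P^{-1}$ of a conical neighborhood of the ray; this contradicts that $X$ is LLNE at infinity. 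Hence $d=1$, so near infinity $X$ is the graph over $L$ of an algebraic map which, since $k>1$, extends to a polynomial map $g\colon L\to L^\perp$; since $C_\infty(X)=L$, the map $g$ must be constant, and therefore $X$ is an affine linear subspace. Finally, when $X$ is an affine linear subspace and $p\in X$, the set $\frac1t(X-p)=X-p$ is a fixed real $2k$-plane, whence $X_t$ is a round great sphere $\mathbb S^{2k-1}$ for every $t>0$.

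The main difficulty, relative to the local case, is the last step of $(2)\Rightarrow(3)$: locally, $d=1$ already means that $X$ is a smooth graph and the proof ends, whereas at infinity one first has to pin down the ramified-covering structure of $P|_X$ near infinity (the analog of properties (i)--(iv) in the proof of Theorem \ref{main_result}, where algebraicity of $X$ replaces the analyticity used there), and then show that having degree one over $L$ and tangent cone $L$ forces the graphing map to have no nonconstant part, i.e. $\deg X=1$. The hypothesis that $X$ is LLNE at infinity is used precisely where the LNE hypothesis was used locally, namely to exclude $d\ge 2$.
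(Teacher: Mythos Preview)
Your proposal is correct and follows essentially the same strategy the paper indicates: the paper's own proof merely says that $(3)\Rightarrow(1),(2)$ is clear, that $(1)\Rightarrow(2)$ goes ``by similar arguments as in the proof of Theorem \ref{main_result}'', and that $(2)\Rightarrow(3)$ goes ``by showing that the degree of $X$ is $1$, like we did to prove that the multiplicity is $1$'' there. Your write-up is exactly this, with Corollary \ref{Fundamental_thm_infinity} playing the role of Corollary \ref{Fundamental_thm_local}.

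The only minor difference is the very last step of $(2)\Rightarrow(3)$. The paper takes the covering degree $d$ of the generic linear projection $P|_X$ to be the degree of $X$ (the analogue at infinity of the multiplicity at $0$), so $d=1$ directly gives that $X$ is an affine linear subspace. You instead argue that $d=1$ makes $X$ a graph over $L$ near infinity, extend the graphing function to a polynomial map $g\colon L\to L^{\perp}$, and use $C_\infty(X)=L$ to force $g$ to be constant. Both routes are valid; the paper's is a one-liner via the classical identification ``generic projection degree $=$ degree'', while yours is more hands-on. (Your appeal to ``$k>1$'' for the extension is not actually needed: if a rational map on $\mathbb C^k$ is regular outside a compact set, its polar hypersurface is bounded, hence empty, so the map is polynomial for any $k\ge 1$.)
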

\begin{proof}
It is easy to see that $(3)\Rightarrow (1)$ and $(2)$. Moreover, by similar arguments as in the proof of Theorem \ref{main_result}, we prove $(1)\Rightarrow (2)$ and by showing that the degree of $X$ is 1, like we did to prove that the multiplicity is 1 in the proof of Theorem \ref{main_result}, we prove $(2)\Rightarrow (3)$.
%
% So, we only need to show that $(1)\Rightarrow (3)$. In order to do this, let $\rho\colon \to  \mathbb{C}^m\setminus \{0\}$ be the semialgebraic homeomorphism given by $\rho(x)=\frac{x}{\|x\|^2}$. Let $Z=\rho(X\setminus\{0\})\cup \{0\}$.
% We have that $Z$ is a semialgebraic set and $C(Z,0)=C_{\infty}(X)$. Thus, $C(Z,0)$ is a homogeneous complex algebraic set and by Corollary \ref{Fundamental_thm_infinity}, $\pi_1(L_0(C(Z,0)))=0$.
%
% Since $\rho$ is a homeomorphism and sends the sphere centred at 0 of radius $R>0$ on the sphere centred at $0$ of radius $1/R$, we have that $H_j(L_0(Z))\cong H_i(\mathbb{S}^{2k-1})$ for all $j$ and $H_i(Z,Z\setminus \{z\})\cong H_i(\R^{2k},\R^{2k}\setminus \{0\})$, for any $z\in Z\setminus\{0\}$.
% Since $Z$ is locally contractible at $0$, we obtain that $H_i(Z,Z\setminus \{0\})\cong H_i(\R^{2k},\R^{2k}\setminus \{0\})$ for all $j$. Therefore, $Z$ is a homology $2k$-manifold at $0$. By Proposition \ref{heinonen}, $C(X,p)$ is a homology $2k$-manifold.
% By using homological exact sequences, it is easy to see that
% $H_j(C(Z,0)\setminus\{0\})=H_j(C_{\infty}(X)\setminus\{0\}) \cong H_j(X_{\infty})=0$,  for $1\leq j\leq 2k-2$.
\end{proof}

\begin{remark}
If $X\subset \mathbb C^m$ is a $k$-dimensional complex algebraic set with no choking cycles at infinity and $H_j(L_{\infty}(X))= 0$ for all $1\leq j\leq 2k-2$ such that $\pi_1(L_0(C_{\infty}(X)))= 0$ then likewise Remark \ref{rem:trivial_cone} we get that $C_{\infty}(X)$ is a linear subspace, without the hypothesis that $X$ is LLNE at infinity;
\end{remark}

\begin{example}
The set $X=\{(x,y,z)\in \mathbb{C}^3;\, z=xy\}$ has choking cycles at infinity, since $\pi_j(L_{X,\infty})= 0$ for $j\in\{1,2\}$, $\pi_1(L_0(C_{\infty}(X)))= 0$ and $X$ is not a plane.
\end{example}

\end{document}